\documentclass{amsart}
\author{Johannes Nicaise, D. Peter Overholser, Helge Ruddat}
\usepackage{graphicx}
\usepackage{caption}
\usepackage{subcaption}
\title{Motivic zeta functions of the quartic and its mirror dual}

\usepackage{amssymb}
\usepackage{tikz}
\usetikzlibrary{arrows}
\usetikzlibrary{decorations.markings}
\tikzstyle arrowstyle=[scale=1]
\usepackage{pgfplots}
\pgfplotsset{compat=newest}
\usetikzlibrary{calc}
\tikzstyle dir=[postaction={decorate,decoration={markings,
    mark=at position .65 with {\arrow[arrowstyle]{stealth}}}}]
\tikzstyle rdir=[postaction={decorate,decoration={markings,
    mark=at position .65 with {\arrowreversed[arrowstyle]{stealth};}}}]
\usepackage{graphicx}
\usepackage{amsthm}
\usepackage{amsrefs}
\usepackage{amsmath}
\usepackage{subfig}
\usepackage{graphicx}
\usepackage{amsmath}
\usepackage{latexsym}
\usepackage{eepic}
\usepackage{epsfig}
\usepackage{graphicx}
\usepackage{amscd}
\usepackage{mathrsfs}
\usepackage[english]{babel}

\newcommand {\Aut} {\operatorname{Aut}}
\newcommand {\ZZ} {\mathbb{Z}}
\newcommand {\QQ} {\mathbb{Q}}
\newcommand {\RR} {\mathbb{R}}
\newcommand {\CC} {\mathbb{C}}
\newcommand {\PP} {\mathbb{P}}
\renewcommand {\P} {\mathcal{P}}
\newcommand {\ra} {\rightarrow}
\newcommand {\Spec} {\operatorname{Spec}}

\newcommand{\A}{\mathbb{A}}
\newcommand{\N}{\mathbb{N}}
\newcommand{\Z}{\mathbb{Z}}
\newcommand{\C}{\mathbb{C}}
\newcommand{\Q}{\mathbb{Q}}
\newcommand{\LL}{\mathbb{L}}
\newcommand{\Var}{\mathrm{Var}}

\newcommand{\llbr}{[\negthinspace[}
\newcommand{\rrbr}{]\negthinspace]}

\newcommand{\llpar}{(\negthinspace(}
\newcommand{\rrpar}{)\negthinspace)}

\newtheorem{theorem}{Theorem}[section]

\newtheorem{lemma}[theorem]{Lemma}

\newtheorem{proposition}[theorem]{Proposition}
\newtheorem{corollary}[theorem]{Corollary}

\theoremstyle{definition}

\newtheorem{definition-lemma}[theorem]{Definition-Lemma}

\theoremstyle{remark}

\numberwithin{equation}{section}
\numberwithin{figure}{section}
\theoremstyle{definition}

\newcounter{countera}
\newcounter{counterb}

\begin{document}
\begin{abstract}
We use a formula of Bultot to compute the motivic zeta function for the toric degeneration of the quartic K3 and its Gross-Siebert mirror dual degeneration. 
We check for this explicit example that the identification of the logarithm of the monodromy and the mirror dual Lefschetz operator works at an integral level.
\end{abstract}
\maketitle
\section{Introduction}
 Motivic integration was introduced by Kontsevich in a famous lecture at Orsay in 1995. Kontsevich invented this theory in order to prove that
  birationally equivalent complex Calabi-Yau varieties have the same Hodge numbers.

 Motivic integrals take values in a suitable Grothendieck ring of varieties and can be specialized to various additive invariants of algebraic varieties, such as Hodge-Deligne polynomials.

 An early application of motivic integration was Denef and Loeser's definition of the {\em motivic zeta function} of a hypersurface singularity, designed as a motivic upgrade of Igusa's $p$-adic local zeta function. This motivic zeta function is a very rich invariant of the singularity and contains several interesting classical invariants, such as the Hodge spectrum.

 We refer to \cite{Ni} for an elementary introduction. In \cite{HaNi2}, an analogous motivic zeta function was defined for a smooth and proper variety $X$ over $\C\llpar t\rrpar$ with trivial canonical sheaf.
  The case of abelian varieties was studied in detail in \cite{HaNi}, but only very few examples are known when $X$ is Calabi-Yau.

 A large class of Calabi-Yau varieties over $\C\llpar t\rrpar$ can be constructed and described by means of the theory of {\em toric degenerations} of Gross and Siebert. Using tropical and logarithmic geometry, Gross and Siebert showed how to construct a Calabi-Yau variety over $\C\llpar t\rrpar$ from combinatorial data: a topological manifold with an integral affine structure with singularities and a piecewise linear function on this manifold \cite{GrSi}. This construction explains mirror symmetry between Calabi-Yau varieties over $\C\llpar t\rrpar$ as a duality between the combinatorial data.

   We aim to use this construction to compute motivic zeta functions.  As a first step, and to illustrate the techniques that come into play, we work out the details in the case of a mirror pair of $K3$ surfaces, and we compare the result with the formula of Stewart and Vologodsky \cite{StVo}.
   \if false
    As shown in \cite{FS86}, the monodromy of a maximally unipotent monodromy degeneration of $K3$ surfaces is determined by a pair of integers, which we will denote by $(t,k)$.  Motivic zeta functions of semistable degenerations of $K3$-surfaces were previously studied by Stewart and Vologodsky \cite{StVo}, and they obtained a beautiful formula that expresses the motivic zeta function in terms of the limit mixed Hodge structure of the degeneration.  In the case of a maximally unipotent monodromy degeneration, it is completed determined by the integer $t$.  We reinterpret a formula of Bultot \cite{B15} for the motivic zeta function of a log smooth model in terms of the combinatorial data of the Gross-Siebert construction and naturally recover the result of Stewart and Vologodsky.  Finally, if we consider the action of the Lefschetz operator instead logarithm of the monodromy, we can extract an analogous pair of integers $(\check{t}, \check{k})$.  We show that mirror symmetry exchanges these pairs.
 \fi
\subsection*{Notation}
We will denote by $K$ the field of complex Laurent series $\C\llpar t\rrpar$ and by $R$ its valuation ring $\C\llbr t\rrbr$. If $\mathcal{X}$ is a scheme over $R$ then its special fiber will be denoted by $\mathcal{X}_0$. 
If $X$ is a separated $K$-scheme of finite type, then an $R$-model of $X$ is a flat separated $R$-scheme $\mathcal{X}$ of finite type, endowed with an isomorphism of $K$-schemes $\mathcal{X}_K\to X$ where $\mathcal{X}_K$ denotes the base change to $K$.

 We denote by $K_0(\Var_{\C})$ the Grothendieck ring of complex varieties, by $\LL=[\A^1_{\C}]\in K_0(\Var_{\C})$ the class of the affine line, and by $\mathcal{M}_{\C}=K_0(\Var_{\C})[\LL^{-1}]$ the localized Grothendieck ring. We recall that $K_0(\Var_{\C})$ is the quotient of the free abelian group on isomorphism classes $[X]$ of complex varieties $X$ modulo the scissor relations $[X]=[Y]+[X\setminus Y]$ whenever $Y$ is a closed subvariety of $X$. The product on $K_0(\Var_{\C})$ is induced by the fibered product of algebraic varieties over $\C$.

All the logarithmic structures in this paper are defined with respect to the Zariski topology.

\section{Motivic zeta functions of Calabi-Yau varieties and the formula of Stewart-Vologodsky}
\subsection{Motivic zeta functions of Calabi-Yau varieties}
 Let $X$ be a smooth, proper, geometrically connected variety over $K$ with trivial canonical line bundle, and assume that $X(K)$ is non-empty. The motivic zeta function
 $Z_{X}(T)$ is a formal power series in $T$ with coefficients in the localized Grothendieck ring $\mathcal{M}_{\C}$. It was introduced in \cite{HaNi2}, and measures the sets of rational points on $X$ over the finite extensions of $K$.
 In his PhD thesis, Bultot gave a formula for $Z_X(T)$ in terms of a  log-smooth $R$-model of $X$. We will now recall a particular case of his formula.

 Let $\mathcal{X}$ be a proper $R$-model of $X$. We endow $\Spec R$ with its standard log structure  and we endow $\mathcal{X}$ with the divisorial log structure induced by its special fiber $\mathcal{X}_0$. The sheaf of monoids defining the log structure on $\mathcal{X}$ will be denoted by $M_\mathcal{X}$.
 We assume that $\mathcal{X}$ is log-smooth over $R$, that $\mathcal{X}_0$ is reduced, and that the relative canonical line bundle $\omega^{\log}_{\mathcal{X}/R}$ (that is, the sheaf of relative log-differentials of maximal degree) is trivial. These assumptions substantially simplify Bultot's formula for the zeta function and they will be sufficient for the applications in this paper.

 We denote by $F(\mathcal{X})$ the {\em Kato fan} of $\mathcal{X}$. This is a topological space endowed with a sheaf of monoids.
  The underlying space of $F(\mathcal{X})$ is the subspace of $\mathcal{X}_0$ that consists of the generic points of intersections of sets of irreducible components of $\mathcal{X}_0$, and the monoid sheaf on $F(\mathcal{X})$ is the restriction of the sheaf of characteristic monoids $\overline{M}_{\mathcal{X}}=M_{\mathcal{X}}/\mathcal{O}_{\mathcal{X}}^{\times}$
 on $\mathcal{X}$.

  The special fiber $\mathcal{X}_0$ has a natural stratification, indexed by the points of the fan $F(\mathcal{X})$. For every $\xi$ in $F(\mathcal{X})$, we denote by
  $U(\xi)$ the subset of the closure of $\{\xi\}$ obtained by removing the closures of the sets $\{\xi'\}$ with $\xi'$ a point of $F(\mathcal{X})$ that does not specialize to $\xi$. The set $U(\xi)$ is a locally closed subset of $\mathcal{X}_0$, and we endow it with its reduced induced structure.

  For every point $\xi$ of $F(\mathcal{X})$, the stalk $\overline{M}_{\mathcal{X},\xi}$ is a toric monoid. The dimension of the monoid $\overline{M}_{\mathcal{X},\xi}$, which coincides with the rank of its groupification, will be denoted by $r(\xi)$.
 The monoid $\overline{M}_{\mathcal{X},\xi}$ contains a distinguished element $e_{\xi}$, which is the image of the residue class of the local parameter $t$ under
  the morphism
  $$\overline{M}_{\Spec R,0}=R/R^{\times}\cong \N\to \overline{M}_{\mathcal{X},\xi}$$ where we wrote $0$ for the closed point of $\Spec R$.
  We denote by $\overline{M}^{\vee,\mathrm{loc}}_{\mathcal{X},\xi}$ the set of local morphisms of monoids
  $$\phi:\overline{M}_{\mathcal{X},\xi}\to \N.$$ Here, ``local'' simply means that $\phi^{-1}(0)=\{1\}$.

 \begin{theorem}[Bultot]\label{thm:bultot}
With the above notations and assumptions, we have
$$Z_{X}(T)=\sum_{\xi\in F(\mathcal{X})}(\LL-1)^{r(\xi)-1}[U(\xi)]\sum_{\phi \in \overline{M}^{\vee,\mathrm{loc}}_{X\mathcal{X}\xi}}T^{\phi(e_{\xi})}\quad \in \mathcal{M}_{\C}[[T]].$$
 \end{theorem}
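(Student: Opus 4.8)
The plan is to obtain this identity as the special case of Bultot's general formula for the motivic zeta function of a log-smooth model \cite{B15} in which the three standing hypotheses hold: $\mathcal{X}/R$ log-smooth, $\mathcal{X}_0$ reduced, and $\omega^{\log}_{\mathcal{X}/R}$ trivial. Recall first that, by the construction in \cite{HaNi2}, the series $Z_X(T)$ is assembled from the motivic volumes $\int_{X(d)}|\omega(d)|$ of $X$ computed over the ramified extensions $K(d)=\C\llpar t^{1/d}\rrpar$ with respect to a gauge form $\omega$ trivializing $\omega_{X/K}$. Hence it suffices to show that for each fixed $d\geq 1$ the $d$-th motivic volume equals $\sum_{\xi\in F(\mathcal{X})}(\LL-1)^{r(\xi)-1}[U(\xi)]\cdot\#\{\phi\in\overline{M}^{\vee,\mathrm{loc}}_{\mathcal{X},\xi}:\phi(e_\xi)=d\}$, and then to sum over $d$.

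For the per-$d$ computation I would invoke Bultot's log-geometric change-of-variables formula. Because $\mathcal{X}$ carries a log structure, its base change to $R(d)$ is well-behaved without an explicit resolution, and $\int_{X(d)}|\omega(d)|$ decomposes into local contributions along the canonical stratification $\{U(\xi)\}_{\xi\in F(\mathcal{X})}$ of $\mathcal{X}_0$. The contribution at $\xi$ is governed by the structure theorem for log-smooth morphisms: the local model for $\mathcal{X}$ near the stratum of $\xi$ is the toric one, namely $\Spec\C[\overline{M}_{\mathcal{X},\xi}]$ with structure map to $\Spec R$ induced by $\N\to\overline{M}_{\mathcal{X},\xi}$, $1\mapsto e_\xi$, fibered over $U(\xi)$ (that $e_\xi$ is the class of the whole boundary divisor $\mathcal{X}_0$, with multiplicity one along each local branch, is precisely the reducedness hypothesis, and places $e_\xi$ in the interior of $\mathrm{cone}(\overline{M}_{\mathcal{X},\xi})$). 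Tracing the change of variables through this model, the arcs on $X(d)$ specializing into $U(\xi)$ that contribute are parametrized by: a point of $U(\xi)$, giving $[U(\xi)]$; a unit in $r(\xi)-1$ of the torus directions of $\Spec\C[\overline{M}_{\mathcal{X},\xi}^{\mathrm{gp}}]$ — one direction being absorbed by the fibration over $\Spec R$ — giving $(\LL-1)^{r(\xi)-1}$; and a local monoid morphism $\phi\colon\overline{M}_{\mathcal{X},\xi}\to\N$ recording the $t^{1/d}$-adic orders of the local coordinates, subject to $\phi(e_\xi)=d$ because the pullback of $t$ must be $t^{1/d}$ to the power $\phi(e_\xi)$. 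The triviality of $\omega^{\log}_{\mathcal{X}/R}$ lets us pick $\omega$ with no zeros or poles along $\mathcal{X}_0$, so the weight $\LL^{-\kappa(\phi)}$ that carries the relative log-canonical (discrepancy) data in Bultot's general formula vanishes identically and no power of $\LL$ decorates the terms. Summing over $d$ turns the count into the series $\sum_{\phi}T^{\phi(e_\xi)}$, giving the assertion.

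Two points warrant care. The local-to-global step requires that the sets $U(\xi)$ partition $\mathcal{X}_0$ into locally closed reduced subschemes indexed by $F(\mathcal{X})$ and that the motivic volume is additive along this partition; this is where the combinatorics of the Kato fan — generic points of intersections of components, and the specialization order among them — is genuinely used, and it is the structural heart of Bultot's proof. One also wants each inner series to lie in $\mathcal{M}_{\C}[[T]]$: since $\overline{M}_{\mathcal{X},\xi}$ is a sharp toric monoid and $e_\xi$ lies in the interior of its cone, the slice $\{\phi\in\overline{M}^{\vee,\mathrm{loc}}_{\mathcal{X},\xi}:\phi(e_\xi)=d\}$ is finite for each $d$, so the coefficients are honest classes. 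I expect the main obstacle to be the bookkeeping in the change of variables through a log-smooth — rather than strict-normal-crossings — chart: correctly identifying the torus factor consumed by the map to $\Spec R$ so that the exponent is $r(\xi)-1$ and not $r(\xi)$, and making precise that triviality of the relative log-canonical bundle annihilates all discrepancy weights. Everything else is a specialization of the main theorem of \cite{B15}.
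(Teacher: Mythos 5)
Your proposal is correct and follows the same route as the paper: the paper's entire proof is the one-line citation to \cite[3.1]{B15}, i.e.\ the statement is exactly the specialization of Bultot's general formula under the three standing hypotheses, which is what you carry out. Your additional explanation of how reducedness of $\mathcal{X}_0$, triviality of $\omega^{\log}_{\mathcal{X}/R}$, and the stratification by the Kato fan each enter the specialization is accurate and consistent with Bultot's argument, just spelled out where the paper is silent.
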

\begin{proof}
This is a particular case of \cite[3.1]{B15}.
\end{proof}

\subsection{The formula of Stewart-Vologodsky}
Here we will focus on the case where $X$ is a $K3$-surface with a strictly semistable $R$-model, i.e., a regular proper $R$-model $\mathcal{X}$  such that $\mathcal{X}_0$ is a reduced divisor with strict normal crossings. If we enlarge our category of models to include algebraic spaces over $R$ instead of only schemes, then we can find such a strictly semistable model $\mathcal{X}$ with the additional property that the relative canonical line bundle $\omega_{\mathcal{X}/R}$ is trivial. Such a model is called a {\em Kulikov model} for $X$. The possible special fibers of Kulikov models have been classified by Kulikov \cite{Kuli} and Persson-Pinkham \cite{PePi}. The shape of the special fiber is closely related to the limit mixed Hodge structure of $X$, and this allowed Stewart and Vologodsky to obtain an explicit formula for the motivic zeta function $Z_{X}(T)$ of $X$ in terms of the limit mixed Hodge structure \cite{StVo}.

\if false
As a first step, Stewart and Vologodsky defined cohomology spaces with $\Z$-coefficients $H^i(X\times_K K^a,\Z)$, where $K^a$ is an algebraic closure of $K$, and a mixed Hodge structure on these cohomology spaces (in \cite{StVo}, the notation $H^i(\lim X,\Z)$ is used instead). This construction is based on logarithmic geometry and the Kato-Nakayama space. Tensoring with $\Z_{\ell}$ (for any prime $\ell$) yields the usual $\ell$-adic cohomology spaces $H^i(X\times_K K^a,\Z_\ell)$.  If $X$ is obtained by base change from a proper family $X'\to C$ over a smooth complex curve $C$ with local coordinate $t$, then $H^i(X\times_K K^a,\Z)$ is canonically isomorphic to the nearby cohomology space $H^i(X'_0,R\psi_{X'}(\Z))$ where $X'_0$ denotes the fiber over $t=0$ and $\psi$ is the nearby cycles functor. In this case, the mixed Hodge structure on $H^i(X\times_K K^a,\Z)$ coincides with the limit mixed Hodge structure on $H^i(X'_0,R\psi_{X'}(\Z))$.

  The cohomology spaces $H^i(X\times_K K^a,\Z)$ are endowed with a unipotent endomorphism $\mathcal{T}$, called the monodromy transformation. The $K3$-surfaces $X$ are divided into types, according to the order of nilpotency of $\mathcal{T}-\mathrm{Id}$: we say that $X$ is of type $\tau$ if $\tau$ is the smallest positive integer such that $(\mathcal{T}-\mathrm{Id})^{\tau}=0$. 
  Toric degenerations of $K3$-surfaces in the sense of Gross and Siebert are always of type III (maximally unipotent monodromy).

\fi

  If $X$ is of type III and $\mathcal{X}$ is a Kulikov model, then the special fiber $\mathcal{X}_0$ is a union of smooth rational surfaces that intersect along cycles of smooth rational curves and the dual intersection complex of $\mathcal{X}_0$ is a triangulation of the $2$-sphere. Stewart and Vologodsky have shown that $Z_{X}(T)$ only depends on the number $t(X)$ of triple points in $\mathcal{X}_0$, i.e., the number of $2$-simplices in the dual intersection complex (this number is denoted by $r_2(X,K)$ in \cite{StVo}). The invariant $t(X)$ be read from the limit mixed Hodge structure on the degree $2$ cohomology of $X$ using \cite[7.1]{FS86}; see Section 4.
 
  \if false
  the following result of Friedman and Scattone \cite{FS86}.
   We consider the logarithm $N$ of the monodromy transformation $\mathcal{T}$:
   $$N=(\mathcal{T}-\mathrm{Id})-\frac{1}{2}(\mathcal{T}-\mathrm{Id})^2.$$ This is a nilpotent operator on
   $$H^2(X\times_K K^a,\Q)=H^2(X\times_K K^a,\Z)\otimes_{\Z}\Q$$ of order $3$, and one can show that the lattice
   $H^2(X\times_K K^a,\Z)$ is stable under $N$ (see \cite[1.2]{FS86}). We set
   $$W_j=W_jH^2(X\times_K K^a,\Q)\cap H^2(X\times_K K^a,\Z)$$ for every integer $j$.
   Then $N^2$ induces a morphism
  $$N^2:W_4/W_3\to W_0$$
   with finite cokernel, and the number $t(X)$ is equal to the order of this cokernel, by \cite[7.1]{FS86} and the proof of
   \cite[1.2]{FS86}. The proof of \cite[1.2]{FS86} also implies that $t(X)$ is always even. We can now formulate the formula for the zeta function as follows.
   \fi

   \begin{theorem}[Stewart-Vologodsky] \label{thm-StVo}
   If $X$ is a $K3$-surface over $K$ of type III, then
   $$Z_{X}(T)=\frac{t(X)}{2}(\LL-1)^2\frac{T(1+T)}{(1-T)^3}+(1+10\LL+\LL^2)\frac{2T}{1-T} \quad\in \mathcal{M}_{\C}[[T]].$$
   In particular, it only depends on $t(X)$, and it fully determines the invariant $t(X)$.
   \end{theorem}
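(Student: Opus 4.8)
The plan is to apply Bultot's formula (Theorem~\ref{thm:bultot}) to a Kulikov model $\mathcal{X}$ of $X$, which exists (as an algebraic space over $R$, which suffices for the formula), is log smooth over $R$ with reduced special fiber, and has trivial $\omega^{\log}_{\mathcal{X}/R}=\omega_{\mathcal{X}/R}$. Since $\mathcal{X}$ is strictly semistable, the points of its Kato fan $F(\mathcal{X})$ are the generic points of the nonempty intersections $V_I=\bigcap_{i\in I}V_i$ of irreducible components of $\mathcal{X}_0$; by the description of type~III special fibers these are precisely the components ($|I|=1$), the double curves ($|I|=2$) and the triple points ($|I|=3$), i.e.\ the simplices of the dual complex $\Gamma$, a triangulation of $S^{2}$. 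For $\xi$ the generic point of $V_I$ one has $\overline{M}_{\mathcal{X},\xi}\cong\N^{|I|}$, hence $r(\xi)=|I|$; as $\mathcal{X}_0$ is reduced, $e_\xi=(1,\dots,1)$; and $U(\xi)=V_I^{\circ}:=V_I\setminus\bigcup_{K\supsetneq I}V_K$. A local morphism $\N^{|I|}\to\N$ is given by a tuple of positive integers, so $\sum_{\phi}T^{\phi(e_\xi)}=\bigl(\sum_{a\ge1}T^{a}\bigr)^{|I|}=\bigl(T/(1-T)\bigr)^{|I|}$, and Theorem~\ref{thm:bultot} collapses to
$$Z_X(T)=\Bigl(\sum_i[V_i^{\circ}]\Bigr)\frac{T}{1-T}+(\LL-1)\Bigl(\sum_{i<j}[D_{ij}^{\circ}]\Bigr)\Bigl(\frac{T}{1-T}\Bigr)^{2}+(\LL-1)^{2}\,t(X)\Bigl(\frac{T}{1-T}\Bigr)^{3}.$$

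I would then evaluate the two motivic coefficients combinatorially. Counting the simplices of $\Gamma$: there are $t(X)$ triangles, hence $\tfrac32 t(X)$ edges (every triangle has three edges, every edge lies on two triangles) and, by Euler's relation for $S^{2}$, $v:=2+\tfrac12 t(X)$ vertices. Each double curve $D_{ij}\cong\PP^{1}$ contains the two triple points coming from the triangles adjacent to its edge, so $[D_{ij}^{\circ}]=[\PP^{1}]-2=\LL-1$ and $\sum_{i<j}[D_{ij}^{\circ}]=\tfrac32 t(X)(\LL-1)$. Each $V_i$ is a smooth rational surface, hence cellular, so $[V_i]=\LL^{2}+b_2(V_i)\LL+1$; its anticanonical cycle $D_i$ of $n_i$ rational curves has class $n_i\LL$, so $[V_i^{\circ}]=[V_i]-[D_i]=\LL^{2}+(b_2(V_i)-n_i)\LL+1$ and $\sum_i[V_i^{\circ}]=v\LL^{2}+\bigl(\sum_i b_2(V_i)-\sum_i n_i\bigr)\LL+v$, with $\sum_i n_i=3\,t(X)$ (each edge counted from both its endpoints).

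The remaining quantity, $\sum_i b_2(V_i)=20+2\,t(X)$, is the one genuinely non-formal ingredient and must be extracted from the fact that $X$ is a K3 surface. The cheapest route is $\chi_{\mathrm{top}}(X)=24$ together with $\chi_{\mathrm{top}}(X)=\sum_i\chi_{\mathrm{top}}(V_i^{\circ})$ (the deeper strata contribute $0$ to the nearby-fibre Euler characteristic): since $\chi_{\mathrm{top}}([V_i^{\circ}])=b_2(V_i)-n_i+2$ this gives $\sum_i b_2(V_i)-\sum_i n_i=24-2v=20-t(X)$. Equivalently one can use the limit mixed Hodge structure, $\dim\mathrm{Gr}^{W}_{2}H^{2}_{\mathrm{lim}}=20$ for type~III, via the Steenbrink spectral sequence with $H^{1}(V_i)=H^{1}(D_{ij})=0$; or surface theory, via $K_{V_i}=-D_i$, the triple-point formula $(D_{ij}^{2})_{V_i}+(D_{ij}^{2})_{V_j}=-2$, and $K_{V_i}^{2}=10-b_2(V_i)$. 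In any case $\sum_i[V_i^{\circ}]=v\LL^{2}+(20-t(X))\LL+v=2(1+10\LL+\LL^{2})+\tfrac{t(X)}{2}(\LL-1)^{2}$.

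Finally I would substitute and simplify. With $u=T/(1-T)$, so $1+u=1/(1-T)$ and $1+2u=(1+T)/(1-T)$, the three contributions are $2(1+10\LL+\LL^{2})\,u$ and $\tfrac{t(X)}{2}(\LL-1)^{2}(u+3u^{2}+2u^{3})$, and since $u+3u^{2}+2u^{3}=u(1+u)(1+2u)=T(1+T)/(1-T)^{3}$ this yields
$$Z_X(T)=\frac{t(X)}{2}(\LL-1)^{2}\,\frac{T(1+T)}{(1-T)^{3}}+(1+10\LL+\LL^{2})\frac{2T}{1-T},$$
the claimed formula; dependence on $t(X)$ alone is manifest, and applying e.g.\ the Hodge--Deligne realization and reading off the coefficient of $T$ recovers $t(X)$. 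The main obstacle is the identity $\sum_i b_2(V_i)=20+2\,t(X)$ (equivalently $\sum_i\chi_{\mathrm{top}}(V_i^{\circ})=24$); the rest is combinatorics of the triangulated sphere and power-series bookkeeping in $\mathcal{M}_{\C}[[T]]$.
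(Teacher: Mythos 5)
Your argument is correct, but it takes a genuinely different route from the paper: the paper's proof of Theorem~\ref{thm-StVo} is a one-line citation of Stewart and Vologodsky's formula \cite{StVo}, whereas you re-derive that formula from scratch by feeding a Kulikov model into Bultot's theorem (Theorem~\ref{thm:bultot}) and exploiting the Kulikov--Persson--Pinkham description of type~III special fibers. Your computation checks out: the monoid-theoretic reduction to $\bigl(T/(1-T)\bigr)^{|I|}$, the Euler-relation bookkeeping on the triangulated sphere, the evaluation $[D_{ij}^{\circ}]=\LL-1$ and $[V_i^{\circ}]=1+(b_2(V_i)-n_i)\LL+\LL^2$, the A'Campo-type identity $\sum_i\chi_{\mathrm{top}}(V_i^{\circ})=24$ yielding $\sum_i b_2(V_i)=20+2t(X)$, and the final identity $u+3u^2+2u^3=T(1+T)/(1-T)^3$ all assemble into exactly the claimed expression. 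What your approach buys is an independent, self-contained verification (modulo \cite{B15} and \cite{Kuli}, \cite{PePi}) that makes the combinatorial origin of each term visible; it is in fact the same style of computation the paper carries out concretely for the quartic in its Section on applying Bultot's theorem, but done uniformly for an arbitrary type~III Kulikov model. What the paper's route buys is brevity and the full strength of \cite{StVo}, which also treats the other types. One technical point you should not gloss over: Theorem~\ref{thm:bultot} as stated applies to scheme models, while a Kulikov model is in general only an algebraic space over $R$; you assert in passing that this ``suffices for the formula,'' but that extension (or an argument replacing the Kulikov model by a log smooth scheme model with trivial $\omega^{\log}_{\mathcal{X}/R}$) needs an explicit justification before the derivation is complete.
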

   \begin{proof}
   This is an immediate consequence of Stewart and Vologodsky's formula (Theorem 1 in \cite{StVo}) for the coefficients of the generating series $Z_{X}(T)$.
   \if false
   , that is, the motivic integrals
   $$\int_{X(d)}|\omega(d)|$$ where $\omega$ is any normalized volume form on $X$.

   To see that $Z_{X}(T)$ determines $t(X)$, it suffices to note that
   the subring $\Z[\LL]$ of $\mathcal{M}_{\C}$ is isomorphic to the polynomial ring $\Z[u]$ {\em via} the ring morphism
   $$\mathcal{M}_{\C}\to \Z[u^{1/2},u^{-1/2}]$$ that sends a smooth and proper complex variety $Y$ to its Poincar\'e polynomial
   $$\sum_{i\geq 0}b_i(Y)u^{i/2}$$ where $b_i(Y)$ denotes the $i$-th Betti number of $Y$.
   \fi
   \end{proof}

 We can also use Theorem \ref{thm-StVo} to compute the {\em motivic volume} or {\em motivic nearby fiber} of $X$, which encodes the geometry of
 a general fiber of $X$ viewed as a degenerating family of complex varieties over a formal punctured disc. It is formally defined by taking the limit of $-Z_{X}(T)$ for
 $T\to +\infty$; see \cite[\S8]{NiSe}. This leads to the following result.

 \begin{corollary}
 If $X$ is a $K3$-surface over $K$ of type III, then the motivic volume of $X$ is equal to
 $$2+20\LL+2\LL^2 \quad \in \mathcal{M}_{\C}.$$
 \end{corollary}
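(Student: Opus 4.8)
The plan is to read the motivic volume directly off the closed formula in Theorem~\ref{thm-StVo}. Recall from \cite[\S8]{NiSe} that $Z_{X}(T)$ is a \emph{rational series}: it lies in the $\mathcal{M}_{\C}$-submodule of $\mathcal{M}_{\C}[[T]]$ generated by $1$ and by the finite products $\prod_{i\in I}\tfrac{T^{b_i}}{1-T^{b_i}}$ with each $b_i>0$, and the motivic volume is by definition $-\lim_{T\to+\infty}Z_{X}(T)$, where $\lim_{T\to+\infty}$ is the unique $\mathcal{M}_{\C}$-linear operator on this submodule sending $1$ to $1$ and each such product to $(-1)^{|I|}$. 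In particular, writing $f=\tfrac{T}{1-T}$, we have $\lim_{T\to+\infty}f^{k}=(-1)^{k}$ for every $k\geq 1$; on genuine rational functions this operator agrees with evaluation at $T=\infty$, so any rational function whose numerator has degree strictly smaller than its denominator is sent to $0$.

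It then remains to compute the limit of each of the two terms in Theorem~\ref{thm-StVo}. The second term is immediate, since $\frac{2T}{1-T}=2f$, so $\lim_{T\to+\infty}\frac{2T}{1-T}=-2$. For the first term, using $\frac{1}{1-T}=1+f$ one finds $\frac{T}{(1-T)^{3}}=f(1+f)^{2}=f+2f^{2}+f^{3}$ and $\frac{T^{2}}{(1-T)^{3}}=f^{2}(1+f)=f^{2}+f^{3}$, hence
$$\frac{T(1+T)}{(1-T)^{3}}=f+3f^{2}+2f^{3},$$
which indeed lies in the rational series submodule and has limit $(-1)+3(1)+2(-1)=0$ (as expected, the numerator having degree $2<3$). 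Combining this with Theorem~\ref{thm-StVo} and $\mathcal{M}_{\C}$-linearity of $\lim_{T\to+\infty}$ gives
$$\lim_{T\to+\infty}Z_{X}(T)=\tfrac{t(X)}{2}(\LL-1)^{2}\cdot 0+(1+10\LL+\LL^{2})\cdot(-2)=-2\bigl(1+10\LL+\LL^{2}\bigr),$$
and negating yields the motivic volume $2+20\LL+2\LL^{2}$.

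There is no real obstacle here: once the operator $\lim_{T\to+\infty}$ and the formula of Theorem~\ref{thm-StVo} are in hand, the argument is a short computation. The only mildly delicate point is to express the rational function $\frac{T(1+T)}{(1-T)^{3}}$ as an explicit $\mathcal{M}_{\C}$-linear combination of the standard generators $f^{k}$ (carried out above), so that the linear operator can be applied term by term; after that everything is purely mechanical.
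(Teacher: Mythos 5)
Your proof is correct and follows exactly the route the paper intends: apply the $\mathcal{M}_{\C}$-linear limit operator of \cite[\S8]{NiSe} to the formula of Theorem~\ref{thm-StVo}, and the explicit expansion $\frac{T(1+T)}{(1-T)^{3}}=f+3f^{2}+2f^{3}$ with $f=\frac{T}{1-T}$ correctly shows the first term vanishes while the second contributes $-2(1+10\LL+\LL^{2})$. The paper leaves this computation implicit, and your write-up supplies it accurately.
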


 This expression nicely reflects the cohomological structure of $X$: subtracting the contributions $1$ and $\LL^2$ for the cohomology spaces of degree $0$ and $4$, respectively, we obtain the motivic decomposition $1+20\LL+\LL^2$ for the degree $2$ cohomology of $X$. The fact that the motivic volume lies in $\Z[\LL]$ reflects the property that the limit mixed Hodge structure is of Hodge-Tate type.

\section{The quartic and its mirror}
We begin by describing the affine manifolds which will serve as the dual intersection complexes of the degenerations of the quartic and its mirror.
\subsection{Affine manifolds and subdivisions}

The affine manifold (see \cite{GS03}) $B$ that is the intersection complex of a degeneration of the quartic can be constructed on the boundary of the Newton polytope of the quartic.
This Newton polytope $P$ is four times a standard $3$-simplex. Translating it so that its unique interior lattice point becomes the origin yields the
tetrahedron in $\mathbb{R}^3$ whose vertices are given by
\begin{equation}
\label{quarticpoly}
(-1,-1,-1), (3,-1,-1), (-1, 3, -1), \hbox{ and } (-1,-1,3).
\end{equation}
We define a subdivision $\mathcal{P}$ of $B:=\partial P$ by cutting each facet of the tetrahedron $P$ into 16 elementary two-simplices as shown in Figure \ref{fig:tetface}. We introduce a distinguished set $\Delta$ given by the barycenters of the 24 one-polytopes of $\mathcal{P}$ that are contained in an edge of the tetrahedron. These are marked in Figure \ref{fig:tetface} by dotted circles. The standard affine structure on the interior of each maximal simplex and a fan structure at each vertex, which is given as shown in Figure \ref{fig:fanstruct}, yield an integral affine structure on $B\setminus \Delta$ (a variation on example 2.10.4 of \cite{GS03}).  We equip $(B, \mathcal{P})$ with a piecewise linear function $\varphi$, defined by the Newton polytopes given in Figure \ref{fig:newton}.  Applying the discrete Legendre transform (see \cite[\S4]{GS03}), we obtain another integral tropical manifold $(\check{B}, \check{\mathcal{P}})$ with multi-valued piecewise linear function $\check\varphi$. A chart of $(\check{{B}}, \check{\mathcal{P}})$ is pictured in Figure \ref{fig:dualface}.
\begin{figure}
                 \resizebox{.5\textwidth}{!}{
                  \begin{tikzpicture}

    \coordinate (p0) at (0,0) ;
    \coordinate (p1) at (8,0) ;
    \coordinate (p2) at (0,8);
    \coordinate (p3) at (p0);
     \coordinate (p4) at (p1);
     \coordinate (offset) at (0.2,0.2);
 \foreach \x in {1,2,3}
    {       \setcounter{countera}{\x}
    	\setcounter{counterb}{\x}
	\addtocounter{countera}{1}
	\addtocounter{counterb}{-1}
	\foreach \n in {1,2,3,4}{
	\draw ($\n*.25*(p\thecountera)-\n*.25*(p\x)+(p\x)$) -- ($\n*.25*(p\thecounterb)-\n*.25*(p\x)+(p\x)$);
	\node at ($.25*\n*(p\thecountera)-.25*\n*(p\x)+(p\x)-.125*(p\thecountera)+.125*(p\x)$) {$\odot$};
	}
	\node at ($(p\x)+(offset)$) {${{a_1}}$};
	\node at ($(p\x)$) {$\bullet$};
	\node at ($(p\x)-(offset)$) {$10$};
	
	\node at ($.25*(p\thecountera)-.25*(p\x)+(p\x)+(offset)$) {${{a_2}}$};
	\node at ($.25*(p\thecountera)-.25*(p\x)+(p\x)$) {$\bullet$};
	\node at ($.25*(p\thecountera)-.25*(p\x)+(p\x)-(offset)$) {${7}$};

	\node at ($.75*(p\thecountera)-.75*(p\x)+(p\x)+(offset)$) {${{a_2}}$};
	\node at ($.75*(p\thecountera)-.75*(p\x)+(p\x)$) {$\bullet$};
	\node at ($.75*(p\thecountera)-.75*(p\x)+(p\x)-(offset)$) {${7}$};

	\node at ($.5*(p\thecountera)-.5*(p\x)+(p\x)+(offset)$) {${{a_3}}$};
	\node at ($.5*(p\thecountera)-.5*(p\x)+(p\x)$) {$\bullet$};
	\node at ($.5*(p\thecountera)-.5*(p\x)+(p\x)-(offset)$) {${6}$};
	
	\node at ($.25*(p\thecounterb)-.25*(p\x)+.5*(p\thecountera)-.5*(p\x)+(p\x)+(offset)$) {${{a_4}}$};
	\node at ($.25*(p\thecounterb)-.25*(p\x)+.5*(p\thecountera)-.5*(p\x)+(p\x)$) {$\bullet$};
	\node at ($.25*(p\thecounterb)-.25*(p\x)+.5*(p\thecountera)-.5*(p\x)+(p\x)-(offset)$) {${5}$};

	 }

 \end{tikzpicture}}
  \caption{The subdivision of a facet of the Newton polytope of the quartic. The union of facets gives the \emph{integral tropical manifold} $B$ with polyhedral decomposition $\mathcal{P}$. The pair $(B,\mathcal{P})$ is the intersection complex (``cone picture") of a degeneration ${\mathcal{X}}$ of a quartic K3 surface and the dual intersection complex (``fan picture") of a \emph{mirror} degeneration $\check{\mathcal{X}}$.  The symbol $\odot$ marks affine singularities, while the vertex labelings ${a_i}$ correspond to different fan structures (see Figure \ref{fig:fanstruct}).  We label edges between vertices of type ${a_2}$ and ${a_3}$ by ${b_1}$, and the remaining edges by ${b_2}$.  All two-dimensional polytopes are of the same type, which we label by $c$.}
  \label{fig:tetface}
  \end{figure}

  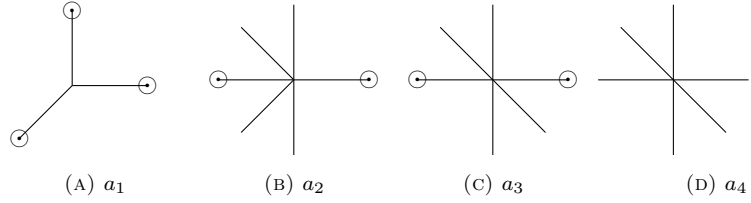
\begin{figure}
        \centering
        \begin{subfigure}[b]{0.2\textwidth}
                \begin{tikzpicture}
    \coordinate (p0) at (0,0) ;
    \coordinate (p1) at (1,0) ;
    \coordinate (p2) at (0,1);
    \coordinate (p3) at (-.7,-.7);
 	\foreach \x in {1,2,3}{
	\draw (p0) -- (p\x);
	\node at (p\x) {$\odot$};

	}
 \end{tikzpicture}
  \caption{${{a_1}}$}
                \label{fig:alpha0}
        \end{subfigure}
                  \begin{subfigure}[b]{0.2\textwidth}    \begin{tikzpicture}
    \coordinate (p0) at (0,0) ;
    \coordinate (p1) at (1,0) ;
    \coordinate (p2) at (0,1);
    \coordinate (p3) at (0,-1);
    \coordinate (p4) at (-1,0);
    \coordinate (p5) at (-.7,-.7);
    \coordinate (p6) at (-.7,.7);

     	\foreach \x in {1,2,...,6}{
	\draw (p0) -- (p\x);
	
	}
\node at (p1) {$\odot$};
\node at (p4) {$\odot$};
	
 \end{tikzpicture}
   \caption{${{a_2}}$}
                \label{fig:alpha2}
        \end{subfigure}
        \begin{subfigure}[b]{0.2\textwidth}
                  \begin{tikzpicture}
    \coordinate (p0) at (0,0) ;
    \coordinate (p1) at (1,0) ;
    \coordinate (p2) at (0,1);
    \coordinate (p3) at (0,-1);
    \coordinate (p4) at (-1,0);
    \coordinate (p5) at (-.7,.7);
    \coordinate (p6) at (.7,-.7);

     	\foreach \x in {1,2,...,6}{
	\draw (p0) -- (p\x);}
	\node at (p1) {$\odot$};
	\node at (p4) {$\odot$};
 \end{tikzpicture}                 \caption{${{a_3}}$}
                \label{fig:bullet}
        \end{subfigure}
                     \begin{subfigure}[b]{0.25\textwidth}
                 \begin{tikzpicture}
    \coordinate (p0) at (0,0) ;
    \coordinate (p1) at (1,0) ;
    \coordinate (p2) at (0,1);
    \coordinate (p3) at (0,-1);
    \coordinate (p4) at (-1,0);
    \coordinate (p5) at (-.7,.7);
    \coordinate (p6) at (.7,-.7);

     	\foreach \x in {1,2,...,6}{
	\draw (p0) -- (p\x);}
 \end{tikzpicture}                 \caption{${{a_4}}$}
                \label{fig:alpha4}
        \end{subfigure}

                \caption{Fan structures.}\label{fig:fanstruct}
\end{figure}
  \begin{figure}
        \centering
        \begin{subfigure}[b]{0.25\textwidth}
                \begin{tikzpicture}
    \coordinate (p0) at (0,0) ;
    \coordinate (p1) at (-1,0) ;
    \coordinate (p2) at (0,-1);
    \coordinate (p3) at (p0);
 	\foreach \x in {0,1,2}{
	\setcounter{countera}{\x}
	\addtocounter{countera}{1}
	\draw (p\x) -- (p\thecountera);
	\node at (p\x) {$\bullet$};

	}
 \end{tikzpicture}
  \caption{${\check{a_1}}$}
                \label{fig:checkalpha1}
        \end{subfigure}
                  \begin{subfigure}[b]{0.25\textwidth}    \begin{tikzpicture}
                     \draw [decorate,decoration={brace,amplitude=3, },xshift=-2pt,yshift=0pt]
(-1,1) -- (-1,2) node [black,midway,xshift=-8pt]
{\footnotesize $\check{b}_2$};
    \coordinate (p0) at (0,0) ;
    \coordinate (p1) at (-1,1) ;
    \coordinate (p2) at (-1,2);
    \coordinate (p3) at (0,3);
    \coordinate (p4) at (1,3);
    \coordinate (p5) at (1,0);
    \coordinate (p6) at (p0);

     	\foreach \x in {0,1,2,...,5}{
	\setcounter{countera}{\x}
	\addtocounter{countera}{1}
	\draw (p\x) -- (p\thecountera);
	\node at (p\x) {$\bullet$};

	}
\node at (0,1) {$\bullet$};
\node at (0,2) {$\bullet$};
\node at (1,1) {$\bullet$};
\node at (1,2) {$\bullet$};

 \end{tikzpicture}
   \caption{${\check{a_2}}$}
                \label{fig:checkalpha2}
        \end{subfigure}
        \begin{subfigure}[b]{0.25\textwidth}
                  \begin{tikzpicture}
   \draw [decorate,decoration={brace,amplitude=3, mirror},xshift=2pt,yshift=0pt]
(1,1) -- (1,4) node [black,midway,xshift=8pt]
{\footnotesize $\check{b}_1$};
    \coordinate (p0) at (0,0) ;
    \coordinate (p1) at (-1,0) ;
    \coordinate (p2) at (-1,3);
    \coordinate (p3) at (0,4);
    \coordinate (p4) at (1,4);
    \coordinate (p5) at (1,1);
    \coordinate (p6) at (p0);

     	\foreach \x in {0,1,2,...,5}{
	\setcounter{countera}{\x}
	\addtocounter{countera}{1}
	\draw (p\x) -- (p\thecountera);
	\node at (p\x) {$\bullet$};

	}
\node at (0,1) {$\bullet$};
\node at (0,2) {$\bullet$};
\node at (0,3) {$\bullet$};
\node at (1,1) {$\bullet$};
\node at (1,2) {$\bullet$};
\node at (1,3) {$\bullet$};
\node at (-1,1) {$\bullet$};
\node at (-1,2) {$\bullet$};
\node at (-1,3) {$\bullet$};
 \end{tikzpicture}                 \caption{$\check{a_3}$}
                \label{fig:checkalpha3}
        \end{subfigure}
\begin{subfigure}[b]{0.2\textwidth}
                 \begin{tikzpicture}
    \coordinate (p0) at (0,0) ;
    \coordinate (p1) at (-1,0) ;
    \coordinate (p2) at (-1,1);
    \coordinate (p3) at (0,2);
    \coordinate (p4) at (1,2);
    \coordinate (p5) at (1,1);
    \coordinate (p6) at (p0);

     	\foreach \x in {0,1,2,...,5}{
	\setcounter{countera}{\x}
	\addtocounter{countera}{1}
	\draw (p\x) -- (p\thecountera);
	\node at (p\x) {$\bullet$};

	}
\node at (0,1) {$\bullet$};
	
 \end{tikzpicture}                 \caption{$\check{a_4}$}
                \label{fig:checkalpha4}
        \end{subfigure}

                \caption{Newton polytopes}\label{fig:newton}
\end{figure}
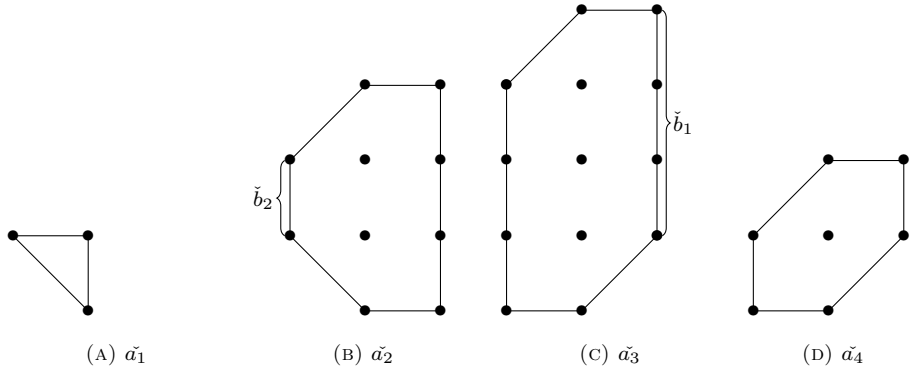
\begin{figure}
                 \resizebox{.5\textwidth}{!}{
                  \begin{tikzpicture}

	\draw (1,5) -- (1,6) -- (2,7) -- (3,7) -- (3,6) -- (2,5) -- cycle;
	\node at (2,6) {$\check{a_4}$};
	\draw (2,4) -- (2,5) -- (3,6) -- (4,6) -- (4,5) -- (3,4) -- cycle;
	\node at (3,5) {$\check{a_4}$};	
	\draw (0,3) -- (0,4) -- (1,5) -- (2,5) -- (2,4) -- (1,3) -- cycle;
	\node at (1,4) {$\check{a_4}$};	
	\draw (0,3) -- (-1, 2);
	\draw (-1, 2) -- (-1,1.5);	
	\draw [dashed] (-1, 1.5) -- (-1,1);	
	\draw (-1, 2) -- (-1.5, 2);	
	\draw [dashed] (-1.5, 2) -- (-2, 2);		
	\draw (0,4) -- (-1.5, 4);
	\draw [dashed] (-1.5, 4) -- (-3,4);	
	\draw (1,6) -- (-0.5, 6);
	\draw [dashed] (-0.5,6) -- (-2, 6);	
	\draw (2,7) -- (2,8);
	\draw (2,8) -- (1.5,8);	
	\draw [dashed] (1.5,8) -- (1,8);		
	\draw (2,8) -- (2.5,8.5);
	\draw [dashed] (2.5,8.5) -- (3,9);	
			
	\draw (3,7) -- (4.5,8.5);
	\draw [dashed] (4.5, 8.5) -- (6,10);
	\draw (4,6) -- (5.5,7.5);
	\draw [dashed] (5.5, 7.5) -- (7,9);	
	\draw (4,5) -- (5,5);
	\draw (5,5) -- (5.5,5.5);
	\draw [dashed] (5.5,5.5) -- (6,6);	
	\draw (5,5) -- (5,4.5);
	\draw [dashed] (5,4.5) -- (5,4);	
	\draw (3,4) -- (3,2.5);
	\draw [dashed] (3,2.5) -- (3, 1);
	\draw (1,3) -- (1,1.5);
	\draw [dashed] (1,1.5) -- (1,0);
	\draw (1,0) -- (2,0)  -- (3,1) -- (4,2) -- (5,4) -- (6,6) -- (7,8) --(7,9) -- (7,10) -- (6,10) -- (5,10) -- (3,9) -- (1,8) -- (-1, 7) -- (-2,6) -- (-3, 5) -- (-3,4) -- (-3, 3) -- (-2,2) -- (-1,1) -- (0,0)-- cycle;
		
	\node at (-1,1.5) {$\odot$};
	\node at ($(-1,1.5)!0.5!(1,1.5)$) {$\check{a_2}$};
	\node at (1,1.5) {$\odot$};
	\node at ($(1,1.5)!0.5!(3,2.5)$) {$\check{a_3}$};	
	\node at (3,2.5) {$\odot$};
	\node at ($(5,4.5)!0.5!(3,2.5)$) {$\check{a_2}$};		
	\node at (5,4.5) {$\odot$};
	\node at ($(5,4.5)!0.5!(5.5, 5.5)$) {$\check{a_1}$};		
	\node at (5.5,5.5) {$\odot$};
	\node at ($(5.5,7.5)!0.5!(5.5, 5.5)$) {$\check{a_2}$};		
	\node at (5.5, 7.5) {$\odot$};
	\node at ($(5.5,7.5)!0.5!(4.5, 8.5)$) {$\check{a_3}$};			
	\node at (4.5, 8.5) {$\odot$};
	\node at ($(2.5,8.5)!0.5!(4.5, 8.5)$) {$\check{a_2}$};		
	\node at (2.5, 8.5) {$\odot$};	
	\node at ($(2.5,8.5)!0.5!(1.5, 8)$) {$\check{a_1}$};		
	\node at (1.5, 8) {$\odot$};	
	\node at ($(-0.5,6)!0.5!(1.5, 8)$) {$\check{a_2}$};			
	\node at (-0.5, 6) {$\odot$};	
	\node at ($(-0.5,6)!0.5!(-1.5, 4)$) {$\check{a_3}$};			
	\node at (-1.5, 4) {$\odot$};	
	\node at ($(-1.5,2)!0.5!(-1.5, 4)$) {$\check{a_2}$};			
	\node at (-1.5, 2) {$\odot$};
	\node at ($(-1.5,2)!0.5!(-1, 1.5)$) {$\check{a_1}$};		
 \end{tikzpicture}}
 \caption{ A chart of the pair $(\check{{B}}, \check{\mathcal{P}})$ dual to the patch of $({B}, \mathcal{P})$ depicted in Figure \ref{fig:tetface}.  The labelings ${\check{a}_i}$ refer to two-dimensional polyhedra dual to the vertices in Figure \ref{fig:tetface}.  Edges of type $\check{b_1}$ separate cells of type $\check{a_2}$ and $\check{a_3}$, while all other edges are of type $\check{b_2}$.  Each vertex is of type $\check{c}$ with identical fan structure given by the fan of $\PP^2$.  The markings $\odot$ again indicate affine singularities, while the dashed segments are ``cut lines" across which the affine structure changes in this chart.}
 \label{fig:dualface}
 \end{figure}
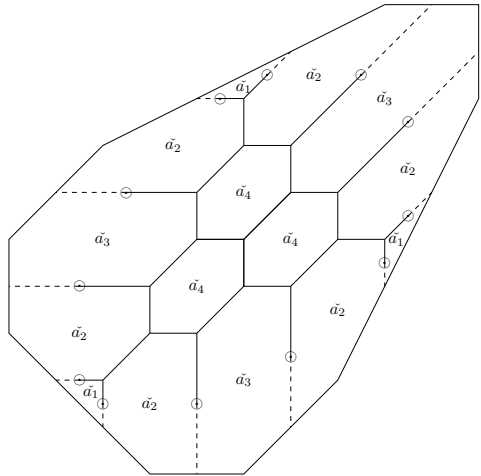
\subsection{Degenerations and log smooth models}
A general description of how to obtain toric degenerations from Batyrev-Borisov Calabi-Yau manifolds was given in \cite{G05}. We follow the slight variant of this given in \cite{RSTZ14}.
\subsubsection{A degeneration by compactifying a pencil}
Let $P$ be the Newton polytope of some hypersurface in $(\CC^*)^n$ that we wish to degenerate.
Assume the origin is the unique interor point of $P$ and that all facets have integral distance one to it, i.e. $P$ is reflexive. 
Let $f$ be a Laurent polynomial defining the hypersurface.
Let $\bar C(P)$ be the sideways truncated cone over $P$, i.e.
$$\bar C(P) = \{(rm,r)\in\RR^{n+1} \mid r\ge 0, m\in P, rm\in P \}.$$
Let $\mathcal{Y}'$ denote the toric variety associated to $\bar C(P)$. We have a proper map $\mathcal{Y}'\ra\A^1$ given by the monomial $z^{e_{n+1}}$ that is associated to the last unit vector $e_{n+1}$ in $\RR^{n+1}$.
We assume that the coefficients of $f$ are generic.
Next consider the Newton polytope $\tilde P$ of the pencil in $(\CC^*)^n$ generated by the original hypersurface and the empty set, i.e.
$$ \tilde P = \operatorname{Newton}( fz^{e_{n+1}}+1) = \operatorname{conv}(P\times\{e_{n+1}\},0).$$
This is the convex hull of the origin and a copy of $P$ placed at height one (with respect to $e_{n+1}$), $z^{e_{n+1}}$ is the pencil parameter.
Since $\tilde P$ coincides with the cut-off of $\bar C(P)$ at $e^*_{n+1}\le 1$, the hypersurface $\mathcal{X}'$ defined by $fz^{e_{n+1}}+1$ doesn't contain any zero-dimensional orbits of $\mathcal{Y}'$.
The restriction of $\mathcal{Y}'\ra \A^1$ to $\mathcal{X}'$ gives a proper map $\pi':\mathcal{X}'\ra \A^1$. This is a degeneration of the compactified hypersurface associated to $f$ (set $z^{e_1}=\infty$ to obtain the original hypersurface).
\subsubsection{Resolving singularities in codimension two}
\label{resolve-codim2}
Note that $\mathcal{Y}'$ and $\mathcal{X}'$ are typically very singular. Let $D'$ denote the horizontal divisor in $\mathcal{Y}'$ given by the vertical facets of $\bar C(P)$ (those that arise from the sideways truncation).
A resolution of singularities in codimension two can be done by choosing another lattice polytope $Q$ whose normal fan is a refinement of $P$'s such that the resulting toric variety $\mathcal{Y}$, i.e. the family $\mathcal{Y}\ra\A^1$, has the properties
\begin{enumerate}
\item the general fibre of $\mathcal{Y}$ is regular,
\item away from $D$, the strict transform of $D'$, the family $\mathcal{Y}\ra \A^1$ is semistable, i.e. the central fibre $\mathcal{Y}_0$ is a normal crossing divisor in $Y$.
\end{enumerate}
We will give such degenerations explicitly in the examples that interest us.
We then restrict the family to the strict transform of $\mathcal{X}'$ to obtain a degenerating family $\pi:\mathcal{X}''\ra \A^1$
for the quartic and its mirror dual.

\subsubsection{Resolving the remaining log singularities}
We now assume $\dim \mathcal{X}''=3$, i.e. $\mathcal{X}''$ is a degeneration of surfaces.
Denote by $\mathcal{X}_0=\pi^{-1}(0)$ the special fibre.
It is not convenient to deal with the remaining log singularities in $\mathcal{X}''$ torically.
If there are some, these lie in the intersection of $D$ and $\operatorname{Sing}\mathcal{X}_0$, hence they are of codimension three in $\mathcal{X}''$.
Note that $\mathcal{Y}$ is smooth at the generic points of $D\cap \mathcal{X}_0$ by the reflexivity assumption.
In the quartic and its mirror, the set of points where $\mathcal{X}''\ra\A^1$ is not log smooth is $D\cap \operatorname{Sing}\mathcal{X}_0$
and each singularity is an ordinary double point. At such a singularity, $\mathcal{X}''\ra\A^1$ takes the shape
$$xy-tw=0$$
where $t$ is the coordinate of $\A^1$. We resolve all of these simultaneously by a projective small resolution $\mathcal{X}\ra \mathcal{X}''$ simply by blowing up components of $\mathcal{X}_0$ in $\mathcal{X}''$ until all singularities are gone. Each singularity in the resolution is replaced by a $\mathbb{P}^1$ that is contained in the strict(=proper) transform of the component of $\mathcal{X}_0$ that was blown up to resolve this singularity.
Let $\tilde{\mathcal{X}}_0$ be the strict transform of $\mathcal{X}_0$ in $\mathcal{X}$.
What is most important for us is that motivically
\begin{equation}
\label{additional-lines}
[\tilde{\mathcal{X}}_0]=[\mathcal{X}_0]+m\LL
\end{equation}
where $m$ is the number of singularities. We will also need that the copies of $\LL$ don't lie in any proper intersection of two strata of $\tilde{\mathcal{X}}_0$. In other words, they are added to the part of a component of $\mathcal{X}_0$ that is the complement of the proper intersections with other components.
We will have $m=24$ for the quartic and its mirror-dual.
We denote the resulting semistable family by $\pi:\mathcal{X}\ra \A^1$.

\subsubsection{Degeneration of the quartic}
The Newton polytope $P$ of the quartic was given in \eqref{quarticpoly}. We construct the family $\mathcal{X}'\ra\A^1$ as in the previous subsections. The general fibre is already smooth. It remains to make the central fibre semistable. For this, we choose a piecewise linear function $\hat\varphi: P\ra\RR$ that is uniquely determined by its values at the lattice points in the boundary of $P$ which we choose on each facet to be as given in Figure~\ref{fig:tetface}. We then define the polytope $Q$ to be the convex hull of the graph of $\hat\varphi$, i.e.
$$Q = \{(m,r)\in\RR^{n+1} \mid r\ge \hat\varphi(m)\}.$$
We leave it to the interested reader to check that
\begin{enumerate}
\item The restriction of $\hat\varphi$ to each facet induces the subdivision given in Figure~\ref{fig:tetface},
\item Let $\tau$ be a simplex in the subdivision of a facet then the piecewise linear function induced by $\hat\varphi$ on the quotient fan of $P$ by $\tau$ is up to addition of a linear function given by the Newton polytope $\check\tau$ in Fig.~\ref{fig:newton}.
\end{enumerate}
Using $Q$, we obtain $\mathcal{X}\ra \A^1$ as in the previous subsections.
Since the quartic meets the singular locus of the union of hyperplanes in $\PP^3$ in $24$ points, we find $m=24$.

\subsubsection{Degeneration of the quartic mirror dual}
To obtain the mirror dual degeneration $\check{\mathcal{X}}\ra\A^1$ we proceed analogously. Let $\check P$ denote the reflexive dual of $P$.
The Batyrev quartic mirror dual is the crepant resolution of a general hypersurface in the projective toric variety associated to $\check P$.
Using $\check P$ we obtain $\bar C(\check P)$, $\check{\mathcal{Y}}\ra\A^1$ and $\check {\mathcal{X}}'\ra\A^1$ similar to before.
Let $\check Q_1$ denote the Newton polytope of the piecewise linear function $\hat\varphi$ that we constructed for the resolution of the quartic degeneration.
Let $\hat\varphi_0:P\ra\RR$ be the piecewise linear function that takes value $\hat\varphi(v)-1$ at a lattice point $v$ in $\partial P$ and let $\check Q_0$ be its Newton polytope. This results in a Minkowski sum.
$$\check Q_1=\check Q_0+\check P.$$
Finally, we set
$$\check Q = \{ (q_0,0)+r(p,1)\in\RR^{n+1} \mid q_0\in \check Q_0, p\in \check P, r\ge 0\}$$
We leave it to the interested reader to check that this satisfies the two items in \S\ref{resolve-codim2}.
Again there are $m=24$ double points to resolve to obtain a semistable family $\check{\mathcal{X}}\ra\A^1$ and the intersection complex of the central fibre is the one depicted in Figure~\ref{fig:dualface}.

\subsection{Applying Bultot's theorem}
With the aid of Bultot's formula for log smooth models (Theorem \ref{thm:bultot}), we use the combinatorial data of $(B, \mathcal{P})$ and $(\check{B}, \check{\mathcal{P}})$ to compute the motivic zeta function of the quartic, the general fiber of $\mathcal{X}$.  The same argument can be applied (after applying the appropriate dualization) to the mirror quartic.

  The polyhedral complex $\mathcal{P}$ is the intersection complex of $\mathcal{X}_0$, encoding its toric strata, while $\check{\mathcal{P}}$ encodes the discrete part of its log structures.  When translated into the language of our construction, incorporating \eqref{additional-lines}, Theorem \ref{thm:bultot} yields
\begin{align}
Z_{\textrm{quartic}}(T)=&24 \mathbb{L}\frac{T}{1-T}+\sum_{\tau\in \mathcal{P}} (\mathbb{L}-1)^{r(\tau)} [U(\tau)]\sum_{\phi \in \overline{M}^{\vee,\mathrm{loc}}_{\mathcal{X},\tau}}T^{\phi(e_{\xi})}\\
=&24\mathbb{L}\frac{T}{1-T}+\sum_{\tau\in \mathcal{P}} (\mathbb{L}-1)^{r(\tau)} [U(\tau)]\sum_{(\mathbf{m}, t)\in C(\check{\tau})^\circ\cap \mathbb{Z}^{r(\tau)}\times \mathbb{N}}T^{n}\label{gszeta}.
\end{align}

Here we've identified $\check{\tau}$ with a polyhedron in $\mathbb{Z}^{r(\tau)}$, $C(\check{\tau})^\circ\subseteq \mathbb{R}^{r(\tau)}\times\mathbb{R}_{\geq 0}$ is the interior of the cone over $\check{\tau}$, $r(\tau)$ is the codimension of $\tau$ in $B$, and
$$U(\tau):=\mathbb{P}_\tau\setminus \bigcup_{\substack{\sigma\in \mathcal{P} \\ \sigma\subsetneq \tau}} \mathbb{P}_\sigma.$$  
The equality above follows from the identification of  $\overline{M}_{\mathcal{X},\tau}$ with the set of points $(\mathbf{n}, t)\in \mathbb{Z}^{r(\tau)}\times \mathbb{N}$ with $t\geq \varphi_\tau (\mathbf{n})$, where $\varphi_\tau$ is a representative of $\varphi$ on the \emph{fan structure} along $\tau$ (see \cite{GrSi}).  Then, by definition of $\check{\tau}$, $C(\check{\tau})\cap \mathbb{Z}^{r(\tau)}\times \mathbb{N}$ can be identified with $\overline{M}^{\vee}_{\mathcal{X},\tau}$, with $C(\check{\tau})^\circ\cap \mathbb{Z}^{r(\tau)}\times \mathbb{N}=\overline{M}^{\vee,\mathrm{loc}}_{\mathcal{X},\tau}$.  If $(\mathbf{m}, t)$ corresponds to $\phi \in \overline{M}^{\vee,\mathrm{loc}}_{\mathcal{X},\tau}$, then $t=\phi(e_{\tau})$.
Then
\begin{align*}
Z_{\textrm{quartic}}(T)=24 \mathbb{L}\frac{T}{1-T}+\sum_{\tau\in \mathcal{P}} (\mathbb{L}-1)^{r(\tau)} [U(\tau)]\sum_{n\geq 1}l_{\check{\tau}}(n)T^{n}
\end{align*}
where $$l_{\check{\tau}}(n):=\#\left\{(\mathbf{m}, t)\in C(\check{\tau})^\circ\cap \mathbb{Z}^{r(\tau)}\times \mathbb{N}|t=n\right\}.$$
As noted in \cite[Proof of Proposition 2.2]{GKR12}, one can compute the function $l_{\check{\tau}}$ by counting simplices in a unimodular triangulation of $\check{\tau}$.
Let $\P_{\check{\tau}}$ be the set of simplices of such a subdivision (which always exists for $\dim\check{\tau}\le 2$).
The number of integral lattice points in the interior of the $j$th scaling of the standard $k$-simplex is given by the coefficient of $T^j$ in the expansion about $0$ of $\left(\frac{T}{1-T}\right)^{k+1}$.  We obtain
\begin{align*}
\sum_{n\geq1} l_{\check{\tau}}(n) T^n= \sum_{\substack{ \sigma \in  \P_{\check{\tau}} \\ \sigma\not \subseteq \partial\check{\tau} }}\left(\frac{T}{1-T}\right)^{{\rm dim}(\sigma)+1}.
\end{align*}
Combining this result with the observation that $U(\tau)=(\mathbb{L}-1)^{2-r(\tau)}$, we can rewrite \ref{gszeta}.  Consider $\tilde{\check{\mathcal{P}}}$, a unimodular triangulation of $\check{\mathcal{P}}$
As before, let $t(\textrm{quartic})$ be the number of triangles appearing in $\tilde{\check{\mathcal{P}}}$.  Then
\begin{align*}
Z_{\textrm{quartic}}(T)=&24 \mathbb{L}\frac{T}{1-T}+\sum_{\tau\in \mathcal{P}} (\mathbb{L}-1)^{2}\sum_{\substack{\sigma\in\tilde{\check{\mathcal{P}}}\\  \sigma \subseteq \check{\tau} \\ \sigma\not \subseteq \partial\check{\tau} }}\left(\frac{T}{1-T}\right)^{3-r(\sigma)} \\
=&24 \mathbb{L}\frac{T}{1-T}+(\mathbb{L}-1)^{2}\sum_{\sigma\in\tilde{\check{\mathcal{P}}}}\left(\frac{T}{1-T}\right)^{3-r(\sigma)} \\
=&24 \mathbb{L}\frac{T}{1-T}+(\mathbb{L}-1)^2\Bigg[t(\textrm{quartic})\left(\frac{T}{1-T}\right)^3\\&+\frac{3t(\textrm{quartic})}{2}\left(\frac{T}{1-T}\right)^2+\left(\frac{t(\textrm{quartic})}{2}+2\right)\left(\frac{T}{1-T}\right)\Bigg]\\
=&\frac{t(\textrm{quartic})}{2}(\mathbb{L}-1)^2\frac{T(1+T)}{(1-T)^3}+(1+10\mathbb{L}+\mathbb{L}^2)\frac{2T}{1-T}.
\end{align*}
 Thus we see agreement with Theorem \ref{thm-StVo}.

Each facet of the tetrahedron $P$ can be triangulated as in Fig.~\ref{fig:tetface}. This yields $4\cdot 16$ triangles giving $t(\textrm{mirror quartic})=64$.  Letting $t(\tau)$ denote the count of lattice triangles in an elementary triangulation of a polygon $\tau$, using the labelling as in Fig.~\ref{fig:newton}, we obtain $t(\textrm{quartic})$ as
$$4t(\check{a_1})+2\cdot 6t(\check{a_2})+ 6t(\check{a_3}) +3\cdot 4t(\check{a_4}) = 4 + 2\cdot 6\cdot 6 + 6\cdot 14 + 3\cdot 4\cdot 10=   280.$$
Thus,
\begin{align*}
Z_{\textrm{quartic}}(T)=&140(\mathbb{L}-1)^2\frac{T(1+T)}{(1-T)^3}+(1+10\mathbb{L}+\mathbb{L}^2)\frac{2T}{1-T} \\
Z_{\textrm{mirror quartic}}(T)=&32(\mathbb{L}-1)^2\frac{T(1+T)}{(1-T)^3}+(1+10\mathbb{L}+\mathbb{L}^2)\frac{2T}{1-T}.
\end{align*}

\section{Arithmetic of the monodromy}
Let us assume now that $\mathcal{X}$ is a type III degeneration of K3 surfaces, $\mathcal{X}_s$ the nearby fibre and $\mathcal{X}_0$ the central fibre.
We follow \cite{FS86}.
Let $\mathcal{T}\in \Aut(H^2(\mathcal{X}_s; \ZZ))$ be the Picard Lefschetz transformation (i.e. monodromy).
One sets
$$N=\log \mathcal{T}=(\mathcal{T}-1)-\frac12(\mathcal{T}-1)^2.$$
Denote by $$W_0\subset W_2\subset W_4=H^2(\mathcal{X}_s; \QQ)$$
the weight filtration induced by $N$, i.e. $W_0=\mathrm{im}\, N^2$, $W_2=\ker N^2=W_0^\perp$.
We have that $W_0\cap H^2(\mathcal{X}_s; \ZZ)\cong \ZZ$, let $\gamma$ be a generator.
Let $(x\cdot y)$ denote the pairing of $x,y\in H^2(X_s; \QQ)$.
By unimodularity, there is a $\gamma'\in H^2(\mathcal{X}_s; \ZZ)$ such that $(\gamma\cdot\gamma')=1$.
Set $$\delta=N\gamma'\in W_2.$$

\begin{proposition}[Friedman and Scattone]
We have for $x\in H^2(\mathcal{X}_s; \QQ)$,
$$ Nx = (x\cdot \gamma)\delta - (x\cdot\delta)\gamma$$
and $\delta\in H^2(\mathcal{X}_s; \ZZ)$, so $N$ preserves $H^2(\mathcal{X}_s; \ZZ)$.
\end{proposition}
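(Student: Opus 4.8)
## Proof proposal

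The plan is to verify the claimed formula $Nx = (x\cdot\gamma)\delta - (x\cdot\delta)\gamma$ by exploiting the very rigid structure of a type III degeneration: the weight filtration has only three steps $W_0\subset W_2\subset W_4$ with $W_0$ of rank $1$, so $N$ factors through a highly constrained map. First I would record the basic structural facts, all borrowed from \cite{FS86}: $N^3=0$, $W_0=\operatorname{im}N^2$ is generated by $\gamma$, $W_2=\ker N^2=W_0^\perp$, and the induced map $N\colon \mathrm{gr}^W_4\to\mathrm{gr}^W_2$, $N\colon\mathrm{gr}^W_2\to\mathrm{gr}^W_0$ together with $N^2\colon\mathrm{gr}^W_4\to\mathrm{gr}^W_0$ being an isomorphism on the one-dimensional graded pieces $\mathrm{gr}^W_4$ and $\mathrm{gr}^W_0$. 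In particular $N$ has rank exactly $2$, so its image is spanned by $\gamma$ and by any vector $\delta=N\gamma'$ with $\gamma'$ not in $W_2$; since $(\gamma\cdot\gamma')=1$ forces $\gamma'\notin\gamma^\perp=W_2$, this $\delta$ is well-defined and nonzero, and $\{\gamma,\delta\}$ is a basis of $\operatorname{im}N$.

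The core of the argument is then a two-line computation using the compatibility of $N$ with the cup product pairing, namely $(Nx\cdot y) + (x\cdot Ny) = 0$. Define the bilinear form $Q(x,y) = (x\cdot\gamma)(y\cdot\delta) - (x\cdot\delta)(y\cdot\gamma)$ and let $A$ be the operator $Ax = (x\cdot\gamma)\delta - (x\cdot\delta)\gamma$, so that $(Ax\cdot y) = Q(x,y)$. One checks that $A$ is skew-self-adjoint for $(\cdot\,,\cdot)$ exactly as $N$ is, that $\operatorname{im}A = \langle\gamma,\delta\rangle = \operatorname{im}N$, and that $\ker A = \{x : (x\cdot\gamma)=(x\cdot\delta)=0\} = \gamma^\perp\cap\delta^\perp$. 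To identify $\ker A$ with $\ker N = W_2\cap(\text{something})$: since $\operatorname{rank}N=2$, $\ker N$ has corank $2$, and $\ker N\subseteq W_2=\gamma^\perp$; moreover $Ny\cdot\gamma' = -y\cdot N\gamma' = -(y\cdot\delta)$, so $y\in\ker N$ implies $(y\cdot\delta)=0$, giving $\ker N\subseteq\gamma^\perp\cap\delta^\perp = \ker A$. Both have corank $2$ (for $A$ because $\gamma,\delta$ are linearly independent and the pairing is nondegenerate), hence $\ker N=\ker A$. Finally, to pin down the constant: evaluate both sides on $\gamma'$. We get $A\gamma' = (\gamma'\cdot\gamma)\delta - (\gamma'\cdot\delta)\gamma = \delta - (\gamma'\cdot\delta)\gamma$, while $N\gamma'=\delta$ by definition; so $N$ and $A$ agree on $\gamma'$ modulo $W_0$, and since also $N\gamma = A\gamma = 0$ (as $\gamma\in W_0=\operatorname{im}N^2$ is annihilated by $N$, and $\gamma\in\gamma^\perp$ so $A\gamma = -(\gamma\cdot\delta)\gamma$, which requires the auxiliary fact $(\gamma\cdot\delta)=0$, immediate from $(\gamma\cdot\delta)=(\gamma\cdot N\gamma')=-(N\gamma\cdot\gamma')=0$), the two skew operators with the same kernel, same image, and agreeing on a complement of $\ker N$ must coincide after a scalar — and the scalar is $1$ because $N\gamma'$ and $A\gamma'$ have the same $\delta$-component.

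With $N=A$ established, the integrality of $\delta$ and the stability of $H^2(\mathcal{X}_s;\ZZ)$ under $N$ follow formally: $\delta=N\gamma'$ lies in $H^2(\mathcal{X}_s;\QQ)$ a priori, but for any $x\in H^2(\mathcal{X}_s;\ZZ)$ the quantities $(x\cdot\gamma)$ and $(x\cdot\delta)$ — wait, we want the converse direction. Instead: apply the formula with $x=\gamma'$ to get $\delta = N\gamma' = (\gamma'\cdot\gamma)\delta-(\gamma'\cdot\delta)\gamma$, which is circular, so integrality of $\delta$ must come from elsewhere — namely from the fact, proven in \cite{FS86}, that $N^2$ induces a map $W_4/W_3\to W_0$ with image of finite index generated by $t(X)\cdot\gamma$ and that the relevant lattice computations (Friedman–Scattone's proof of their Lemma 1.2) show $\delta\in H^2(\mathcal{X}_s;\ZZ)$; alternatively one notes $\delta = N\gamma'$ and uses that $N(H^2(\ZZ))\subseteq H^2(\ZZ)$ is what \cite[1.2]{FS86} proves directly, so $\delta$ is integral. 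Granting $\delta\in H^2(\mathcal{X}_s;\ZZ)$, the formula $Nx=(x\cdot\gamma)\delta-(x\cdot\delta)\gamma$ exhibits $Nx$ as an integer combination of the integral classes $\gamma,\delta$ whenever $x$ is integral, since $(x\cdot\gamma),(x\cdot\delta)\in\ZZ$ by unimodularity of the intersection form. The main obstacle is the identification $N=A$ rather than merely $N = cA$ for some rational constant, and more precisely ensuring the normalization constant is exactly $1$; this is where the specific choice $\delta = N\gamma'$ with $(\gamma\cdot\gamma')=1$ does the essential work, and I would double-check it by comparing $\delta$-components as above, which is the cleanest route and avoids any case analysis.
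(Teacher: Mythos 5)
The paper offers no argument for this proposition: its ``proof'' is the single citation \cite[Lemma 1.1]{FS86}. So your attempt is a genuine reconstruction, and its skeleton --- compare $N$ with the explicit operator $Ax=(x\cdot\gamma)\delta-(x\cdot\delta)\gamma$ using skew-adjointness for the cup-product pairing, the common corank-$2$ kernel, and the normalization $N\gamma'=\delta$ --- is the right one and can be completed. Two steps, however, are not yet proofs as written.

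First, the pivotal claim that ``two skew operators with the same kernel, same image, and agreeing on a complement of $\ker N$ must coincide after a scalar'' is wrong as a principle and unestablished as a premise: having the same kernel and image never forces proportionality (any two automorphisms of a plane share both), and you have only compared $N$ and $A$ on $\gamma$ and $\gamma'$; since $\gamma\in\ker N$, the span of $\ker N$ and $\gamma'$ is a hyperplane, not a complement of $\ker N$. The fix is exactly the skew-adjointness you introduce but never deploy at this point: $B_N(x,y)=(Nx\cdot y)$ and $B_A(x,y)=(Ax\cdot y)$ are alternating forms whose radicals both equal $\ker N=\gamma^{\perp}\cap\delta^{\perp}$, hence they descend to nondegenerate alternating forms on the $2$-dimensional quotient $H^2(\mathcal{X}_s;\QQ)/\ker N$, a $1$-dimensional space of forms; therefore $N=cA$, and $c=1$ by your $\delta$-component comparison on $\gamma'$ (indeed $(\gamma'\cdot\delta)=(\gamma'\cdot N\gamma')=0$ by skew-adjointness, so $A\gamma'=\delta=N\gamma'$ exactly, not just modulo $W_0$). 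Second, the integrality of $\delta$ is not proved: you visibly circle back and end by citing \cite[1.2]{FS86}, which is the statement to be shown. A short self-contained argument is available once the formula over $\QQ$ is in hand: $T=e^{N}$ is an integral isometry fixing $\gamma$, and $\delta=N\gamma'=(T-1)\gamma'-\tfrac12(T-1)^{2}\gamma'$ with $(T-1)^{2}\gamma'=N^{2}\gamma'=-(\delta\cdot\delta)\gamma$; since $((T-1)\gamma'\cdot\gamma)=(T\gamma'\cdot T\gamma)-(\gamma'\cdot\gamma)=0$, one computes $(\delta\cdot\delta)=\bigl((T-1)\gamma'\cdot(T-1)\gamma'\bigr)$, which is even because the K3 lattice is even, so $\delta=(T-1)\gamma'+\tfrac{(\delta\cdot\delta)}{2}\gamma$ is integral and the formula then shows $N$ preserves $H^2(\mathcal{X}_s;\ZZ)$.
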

\begin{proof}\cite[Lemma 1.1]{FS86}.\end{proof}

There is a pair of integers $(t,k)$ that determines $N$ up to conjugation \cite[Prop. 1.7]{FS86}.
One defines
$$t\stackrel{\textrm{\tiny def}}{=}(\delta\cdot\delta)=\#(\textrm{triangles in a unimodular triangulation of }(\check B,\check \P))$$
where the second equality is \cite[Prop 7.1]{FS86} where it appears as the count of triple points of a semistable birational model for $\mathcal{X}_0$.
The index $k$ is the maximal integer such that $N/k$ is integral, i.e. by the proposition this is the maximal integer such that $\delta/k$ is integral.
Note that $k^2$ divides $t$.

Let $f:\mathcal{X}_s\ra B$ denote the continuous map that is the SYZ fibration \cite{G01}.
It follows from \cite{G01} that $\gamma$ is a fibre and $\gamma'$ is a section.
\begin{lemma}
We have that $\delta=N\gamma'$ is given by the tropical 1-cycle that is the 1-skeleton of $\P$ supporting sections of $\check\Lambda$ that are given by the kinks of $\varphi$.
\end{lemma}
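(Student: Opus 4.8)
The plan is to identify $\delta = N\gamma'$ explicitly by computing the monodromy action in terms of the SYZ fibration $f:\mathcal{X}_s \to B$, using the description of $N$ furnished by the Friedman–Scattone proposition: $Nx = (x\cdot\gamma)\delta - (x\cdot\delta)\gamma$. Since $\gamma = [f^{-1}(\mathrm{pt})]$ is the class of a fibre and $\gamma'$ is a section (by \cite{G01}), the operator $N$ is pinned down once we know $\delta$ as a class in $H^2(\mathcal{X}_s;\ZZ)$, and our task is to recognize this class tropically. First I would recall the general principle (going back to Gross's analysis in \cite{G01} and the Gross–Siebert reconstruction \cite{GrSi}) that the monodromy of a maximally unipotent degeneration acts on $H^2$ of the nearby fibre via the affine monodromy of the integral affine structure on $B \setminus \Delta$ twisted by the piecewise linear data $\varphi$; concretely, the radiance obstruction / kink data of $\varphi$ around the singular locus $\Delta$ produces a well-defined section of the sheaf $\check\Lambda$ of integral cotangent vectors, supported on the $1$-skeleton of $\mathcal{P}$.

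The \textbf{key steps}, in order, are: (1) Represent $H^2(\mathcal{X}_s;\ZZ)$ via the Leray spectral sequence of $f$, so that the weight filtration $W_\bullet$ matches the filtration by the dimension of the support over $B$: $W_0$ is the line spanned by the fibre class $\gamma$, $W_2/W_0$ is built from the monodromy-invariant cohomology of fibres over the $1$-skeleton, and $W_4/W_2$ from sections over $B$. (2) Compute $N\gamma'$ for the section class $\gamma'$: monodromy transport of a section around a loop encircling a component of $\Delta$ differs from the original section by exactly the affine-linear translation prescribed by the kink of $\varphi$ across the corresponding wall, so $N\gamma'$ is represented by the chain $\sum$ (edge of the $1$-skeleton) $\times$ (the affine vector field on that edge given by the kink of $\varphi$). (3) Check that this chain is a cycle — the closedness follows from the cocycle/balancing condition satisfied by the kinks of a genuine piecewise linear function around each vertex of $\mathcal{P}$ (the "no-monodromy-around-a-vertex" constraint, equivalently that $\varphi$ is globally well-defined away from $\Delta$). (4) Identify this cycle with "the tropical $1$-cycle that is the $1$-skeleton of $\mathcal{P}$ supporting sections of $\check\Lambda$ given by the kinks of $\varphi$," which is precisely the assertion.

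The \textbf{main obstacle} I expect is Step (2): making the transport-of-a-section computation precise at the integral (not just rational) level, and in particular checking that the coefficient attached to each edge is exactly the kink of $\varphi$ with the correct normalization, rather than a multiple of it. This requires carefully tracking the identification of $\overline{M}_{\mathcal{X},\tau}$ with $\{(\mathbf n,t) : t\ge \varphi_\tau(\mathbf n)\}$ used already in the computation of the zeta function, together with the explicit fan structures along the edges (the types $b_1,b_2$ of Figure~\ref{fig:tetface}, resp.\ $\check b_1,\check b_2$ of Figure~\ref{fig:dualface}), so that the local contribution at each edge is read off correctly. A secondary technical point is to verify that the pairing computation is consistent, i.e.\ that the class $\delta$ so constructed indeed satisfies $(\gamma\cdot\delta)=0$ and lies in $W_2$, which is forced by the Friedman–Scattone formula but should be visible directly from the fact that a $1$-skeleton cycle meets a generic fibre trivially. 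Once Step (2) is in place, Steps (1), (3), (4) are essentially bookkeeping with the Gross–Siebert dictionary already invoked in the previous section, and the identification $\delta = N\gamma'$ with the stated tropical cycle follows.
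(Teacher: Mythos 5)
First, a point of reference: the paper does not actually prove this lemma. Its ``proof'' consists of deferring the statement to the forthcoming paper \cite{GSup} and pointing to \cite{RSTZ14} for an explanation of how one arrives at it. So there is no in-paper argument to match yours against; the question is whether your proposal would itself constitute a proof.

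It would not, as written, and you have in effect said so yourself. Your Step (2) --- the computation showing that parallel transport of the section class $\gamma'$ around the discriminant produces, on each edge of the $1$-skeleton of $\P$, a section of $\check\Lambda$ equal to the kink of $\varphi$ on the nose (integrally, with no extraneous multiple) --- is the entire mathematical content of the lemma, and you flag it as ``the main obstacle'' without carrying it out. Steps (1), (3) and (4) are, as you say, bookkeeping: the identification of the weight filtration with the Leray filtration of the SYZ fibration is available for K3 surfaces from \cite{G01}, the closedness of the candidate cycle follows from the balancing of the kinks of a globally defined PL function around each vertex, and Step (4) is a restatement of the conclusion. But none of that substitutes for the transport computation, which requires working in a local model of the log smooth (or Kulikov) structure near an edge of $\P$, identifying the vanishing-cycle geometry over a neighbourhood of that edge with the toric data $\overline{M}_{\mathcal{X},\tau}$, and extracting the coefficient. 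This is precisely the computation the authors chose to outsource to \cite{GSup}. A secondary caution on Step (1): taking ``$W_2/W_0$ built from cohomology over the $1$-skeleton'' as the ambient framework presupposes a comparison between the topological SYZ fibration and the algebraic degeneration whose monodromy defines $N$; for the Gross--Siebert models used here that comparison is part of what \cite{RSTZ14} and \cite{GSup} supply, so it should be cited as an input rather than asserted. In short: your roadmap points in the right direction and is consistent with the references the paper leans on, but the lemma remains unproved by it --- the decisive step is named, not done.
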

\begin{proof} We leave this to the upcoming paper \cite{GSup} 
and refer to \cite{RSTZ14} that explains how one arrives at this.
\end{proof}

This Lemma allows us to compute the index $k$, i.e. the maximal integer such that $\delta/k$ is integral.
For the computation of $t$, it is a nice exercise to compute $(\delta\cdot\delta)$ using the tropical intersection product but for us it is easier to go by the triangle count.

\begin{theorem} We have
$$
\begin{array}{cc}
t(\textrm{quartic})=280=2^3\cdot5\cdot7, & k(\textrm{quartic})=1,\\
t(\textrm{quartic-mirror})=64=2^6, & k(\textrm{quartic-mirror})=4.\\
\end{array}
$$
\end{theorem}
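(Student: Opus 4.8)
The plan is to establish the two columns of the table separately: the triangle counts $t$ follow from the combinatorial bookkeeping already carried out, while the indices $k$ require the Lemma identifying $\delta$ with an explicit tropical $1$-cycle on the $1$-skeleton of $\mathcal{P}$ (resp.\ $\check{\mathcal{P}}$).

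First I would dispose of the $t$-values. For the quartic-mirror, the intersection complex is $(B,\mathcal{P})$ itself, so $t(\textrm{quartic-mirror})$ is the number of two-simplices in a unimodular triangulation of $(B,\mathcal{P})$; since each of the four facets of the tetrahedron is cut into $16$ elementary triangles in Figure~\ref{fig:tetface}, we get $4\cdot 16=64=2^6$. For the quartic, the relevant complex is the dual $(\check B,\check{\mathcal{P}})$, so $t(\textrm{quartic})$ equals the number of triangles in a unimodular triangulation of $\check{\mathcal{P}}$. Summing the contributions $t(\check{\tau})$ of the two-cells $\check\tau$ dual to the vertices of $\mathcal{P}$, weighted by the number of vertices of each type, gives
$$4\,t(\check a_1)+12\,t(\check a_2)+6\,t(\check a_3)+12\,t(\check a_4)=4+12\cdot 6+6\cdot 14+12\cdot 10=280=2^3\cdot 5\cdot 7,$$
where the individual counts $t(\check a_i)$ are read off from the Newton polytopes in Figure~\ref{fig:newton}. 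Both of these computations were in effect performed already in the previous section, so this step is routine.

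Next I would compute $k$. By the Lemma, $\delta=N\gamma'$ is represented by the tropical $1$-cycle supported on the $1$-skeleton of $\mathcal{P}$ (resp.\ $\check{\mathcal{P}}$), with the weight on each edge given by the kink of $\varphi$ (resp.\ $\check\varphi$) across that edge, i.e.\ the integral length of the corresponding edge of the dual Newton polytope. For the quartic, $\varphi$ is the function of Figure~\ref{fig:tetface}/\ref{fig:newton}: the dual edges $\check b_1$ and $\check b_2$ appearing in Figure~\ref{fig:dualface} have integral lengths $3$ and $2$ respectively, hence $\gcd=1$ and $k(\textrm{quartic})=1$. For the quartic-mirror one works on $(\check B,\check{\mathcal{P}})$: here all maximal cells have the $\PP^2$-fan structure, the multivalued function $\check\varphi$ is uniform, and the kink across every edge of $\check{\mathcal{P}}$ is the same value; tracing through the discrete Legendre transform this common kink is $4$, so $\delta/4$ is integral while $\delta/(4k)$ is not for $k>1$, giving $k(\textrm{quartic-mirror})=4$. (As a consistency check one verifies $k^2\mid t$: $1\mid 280$ and $16\mid 64$.)

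The main obstacle is the last step: making the identification of the edge weights of $\delta$ with the kinks of $\varphi$/$\check\varphi$ precise enough to read off the correct gcd, and in particular checking that no integrality is lost at the affine singularities (the $\odot$-points) or at the vertices where the fan structure changes — one must verify that the tropical cycle is genuinely a well-defined integral class with the claimed edge multiplicities, and that dividing by $k$ keeps it balanced at every vertex. This amounts to a careful local analysis around each vertex type $a_i$ (resp.\ $\check a_i$) using the explicit fan structures of Figure~\ref{fig:fanstruct}; I would organize it by checking the balancing condition vertex-type by vertex-type, which reduces to a finite check. The triangle counts, by contrast, are immediate from the figures.
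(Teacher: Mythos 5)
Your computation of the two $t$-values is correct and coincides with the paper's (which simply refers back to the triangle counts of the previous section). The genuine gap is in your treatment of $k$. You compute $k$ as the gcd of the edge weights (kinks) of the particular tropical representative of $\delta$ supported on the $1$-skeleton. But $k$ is the divisibility of $\delta$ as an integral cohomology class, and the gcd of the weights of one representative neither bounds it from above nor from below: the class can be homologous, by deforming edges across the affine singularities (adding tropical $2$-cycles), to a cycle with strictly larger gcd. The quartic-mirror is exactly such a case: the sections of $\check\Lambda$ along the $1$-skeleton of $\check{\mathcal P}$ are \emph{primitive} (the kinks of $\check\varphi$ are the lattice lengths of the edges of $\mathcal P$, which are all $1$ --- your claimed common kink of $4$ is not what the Legendre transform gives), yet the cycle is homologous to $4$ times a reduced curve (Figure~\ref{fig:fourtimesprimitive}), whence $k=4$. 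So your method, applied consistently, would return $k(\textrm{quartic-mirror})=1$, and the value $4$ only appears after the homological deformation you do not perform. Moreover, even once divisibility by $4$ is established, maximality is not automatic: $k^2\mid 64$ still allows $k=8$, and the paper excludes this using the Friedman--Scattone classification \cite{FS86} (a triangulation of $S^2$ refining a single triangle is impossible), not as a mere consistency check.

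Symmetrically, for $k(\textrm{quartic})=1$ the inference ``gcd of kinks is $1$, hence $k=1$'' is a non sequitur: non-divisibility of a class must be certified by exhibiting an integral class pairing with $\delta$ to $\pm 1$ (the K3 lattice being unimodular). This is what the paper does, constructing an explicit Y-shaped tropical cocycle whose ends encircle singularities near a corner of the tetrahedron and which meets a perturbation of the $1$-skeleton cycle transversely in one point with local pairing $\pm 1$ (Figure~\ref{fig:intersection-of-cycles}). Your proposal contains no substitute for this step, and the ``balancing check vertex-by-vertex'' you outline addresses well-definedness of the cycle but not its divisibility as a class. (Minor point: $\check b_2$ has lattice length $1$, not $2$, in Figure~\ref{fig:newton}; this does not affect the gcd.)
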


\begin{figure}
                 \resizebox{.5\textwidth}{!}{
                  \begin{tikzpicture}

	\draw[thick] (2,5) -- node[below right] {4} (7,10);
	\draw[thick] (2,5)  -- node[right] {4} (2,0);
	\draw[thick] (2,5)  -- node[above] {4} (-3,5);

	\draw [dashed] (-1, 1.5) -- (-1,1);	
	
	\draw [dashed] (-1.5, 2) -- (-2, 2);		

	\draw [dashed] (-1.5, 4) -- (-3,4);	

	\draw [dashed] (-0.5,6) -- (-2, 6);	

	\draw [dashed] (1.5,8) -- (1,8);		

	\draw [dashed] (2.5,8.5) -- (3,9);	

	\draw [dashed] (4.5, 8.5) -- (6,10);

	\draw [dashed] (5.5, 7.5) -- (7,9);	

	\draw [dashed] (5.5,5.5) -- (6,6);	

	\draw [dashed] (5,4.5) -- (5,4);	

	\draw [dashed] (3,2.5) -- (3, 1);

	\draw [dashed] (1,1.5) -- (1,0);
	\draw (1,0) -- (2,0)  -- (3,1) -- (4,2) -- (5,4) -- (6,6) -- (7,8) --(7,9) -- (7,10) -- (6,10) -- (5,10) -- (3,9) -- (1,8) -- (-1, 7) -- (-2,6) -- (-3, 5) -- (-3,4) -- (-3, 3) -- (-2,2) -- (-1,1) -- (0,0)-- cycle;
		
	\node at (-1,1.5) {$\odot$};
	\node at (1,1.5) {$\odot$};
	\node at (3,2.5) {$\odot$};
	\node at (5,4.5) {$\odot$};
	\node at (5.5,5.5) {$\odot$};
	\node at (5.5, 7.5) {$\odot$};
	\node at (4.5, 8.5) {$\odot$};
	\node at (2.5, 8.5) {$\odot$};	
	\node at (1.5, 8) {$\odot$};	
	\node at (-0.5, 6) {$\odot$};	
	\node at (-1.5, 4) {$\odot$};	
	\node at (-1.5, 2) {$\odot$};
 \end{tikzpicture}}
 \caption{ The tropical curve given by the $1$-skeleton in Fig.~\ref{fig:dualface} is homologous to the tropical curve depicted here}
 \label{fig:fourtimesprimitive}
 \end{figure}
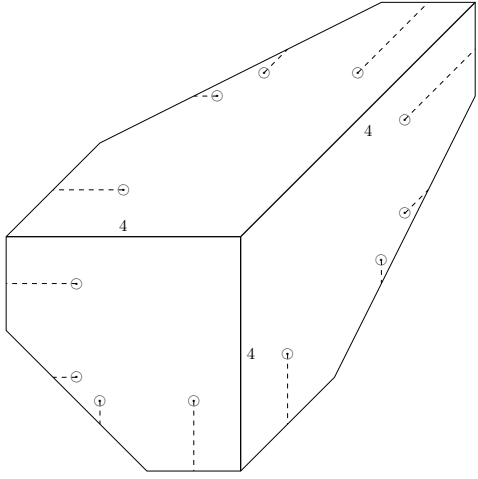
  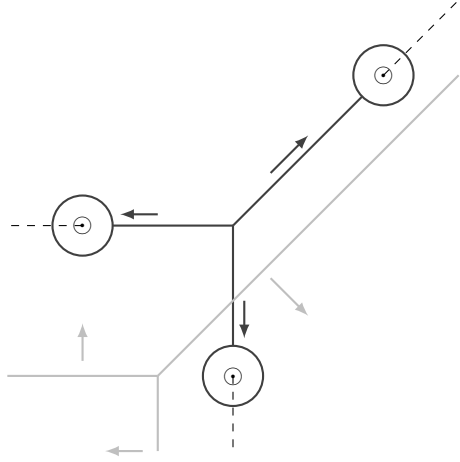
\begin{figure}
                  \begin{tikzpicture}

	\draw [thick, color=darkgray](2,2) -- (0,0);
	\draw [thick, color=darkgray,   -latex] (.5,.7) -- (1,1.2);	
	\draw [thick, color=darkgray](-2,0) -- (0,0);
	\draw [thick, color=darkgray,   -latex] (-1,0.15) -- (-1.5,0.15);	
	\draw [thick, color=darkgray](0,-2) -- (0,0);		
	\draw [thick, color=darkgray,   -latex] (0.15,-1) -- (0.15,-1.5);

	\draw[thick, color=lightgray] (3,2) -- (-1,-2);
	\draw [thick, color=lightgray,   -latex] (0.5, -0.7) -- (1,-1.2);	
	\draw [thick, color=lightgray](-3,-2) -- (-1,-2);
	\draw [thick, color=lightgray,   -latex] (-2,-1.8) -- (-2,-1.3);	
	\draw [thick, color=lightgray](-1,-3) -- (-1,-2);	
	\draw [thick, color=lightgray,   -latex] (-1.2,-3) -- (-1.7,-3);		
	\draw (0,-2) [color=darkgray, thick, fill=white] circle  (.4cm);
	\draw (-2,0) [color=darkgray, thick, fill=white] circle  (.4cm);
	\draw (2,2) [color=darkgray, thick, fill=white] circle  (.4cm);
	\node at (2,2) {$\odot$};
	\node at (-2,0) {$\odot$};
	\node at (0,-2) {$\odot$};	
	\draw[dashed] (2,2) -- (3,3);
	\draw[dashed] (0,-2) -- (0,-3);	
	\draw[dashed] (-2,0) -- (-3,0);

 \end{tikzpicture}
 \caption{Simple intersection of a tropical cycle and cocycle }
 \label{fig:intersection-of-cycles}
 \end{figure}

\begin{proof}
The quantities $t$ were computed in the previous section.  Consider Fig.~\ref{fig:dualface}. The tropical curve $\alpha$ obtained from the 1-skeleton in the intersection complex of the quartic-mirror is the tropical curve part of which is shown in Fig.~\ref{fig:dualface} and the sections of $\check\Lambda$ are primitive along each edge. Furthermore, the sections are invariant under local monodromy wherever an edge passes through a singularity. This allows us to deform the edges over the singularities by adding a suitable tropical 2-cycle. This way, it can be seen that $\alpha$ is homologous to the curve depicted in Fig.~\ref{fig:fourtimesprimitive}.
Let $\alpha'$ be the reduced tropical curve, i.e. $\alpha=4\alpha'$. We need to show that $k=4$ is the maximum, i.e. that $\alpha'$ is not a homologous to a proper multiple of another curve.
This follows from looking at $t=t(\textrm{quartic})=64$. Since $k^2$ divides $t$, the only other options would be $k=8$ but then, by \cite[Fig. B]{FS86}, $\P$ would need to be the refinement of a triangulation of the sphere with a single triangle which is impossible, hence $k=4$.
It remains to show that $k(\textrm{quartic})=1$. It suffices to exhibit a cocycle that pairs to $1$ with the tropical cycle in given by the 1-skeleton in the intersection complex of the quartic, see Fig.~\ref{fig:tetface}. One such cycle is shown in black in Fig~\ref{fig:intersection-of-cycles}. It is Y-shaped where each end encircles a singularity that neighbours a fixed corner of the tetrahedron.
Let's denote this one by $\alpha$. We perturb the 1-skeleton cycle slightly so that it intersects $\alpha$ transversely in a single point, let's call this $\beta$. The pairing of the sections of $\Lambda$ and $\check\Lambda$ at the stalk of the intersection point is $\pm 1$, see Fig~\ref{fig:intersection-of-cycles}.
This pairing coincides with the homology-cohomology pairing of the usual (co-)cycles in $\hat\alpha\in H^1(\mathcal{X}_s,\ZZ)$ and $\hat\beta\in H_1(\mathcal{X}_s,\ZZ)$ reconstructed from $\alpha$ and $\beta$ respectively as in \cite{RS14}.
This implies that $\hat\beta$ is primitive and thus $k(\textrm{quartic})=1$.
\end{proof}
Let us consider the Lefschetz operator $L\in\operatorname{End}(H^\bullet(\mathcal{X}_s,\ZZ))$ on the cohomology of the quartic, i.e. $L$ is the cup product with $4H$ for $H$ the hyperplane class.
We may define $\check k(\hbox{quartic})$, $\check t(\hbox{quartic})$ using $L$ analogous to how we defined $k,t$ using $N$.
This means $\check k(\hbox{quartic})$ is the maximum such that $H/\check k$ is integral, i.e. $\check k(\hbox{quartic})=4$.
We find that $\check t(\hbox{quartic}) := |\operatorname{coker} L^2| = 64$ as this can be computed from the ambient $\PP^3$ where it is $(4H)^2.\hbox{(quartic)}=(4H)^3=64H^3$.

We deduce the following mirror symmetry result.
\begin{corollary}
$\check t(\hbox{quartic})= t(\hbox{quartic-mirror})$,\\
$\check k(\hbox{quartic})= k(\hbox{quartic-mirror})$.
\end{corollary}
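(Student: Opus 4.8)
The plan is to deduce the Corollary by assembling the numerical data that has already been pinned down in the Theorem and in the discussion of the Lefschetz operator immediately preceding the statement; no new geometric input is required. First I would record the two mirror-side quantities. On the quartic itself the Lefschetz operator $L$ on $H^\bullet(\mathcal{X}_s,\ZZ)$ is cup product with $4H$, so the largest integer dividing the relevant class integrally is $\check k(\hbox{quartic})=4$, and $\check t(\hbox{quartic})=|\operatorname{coker} L^2|$ is computed in the ambient $\PP^3$: the restriction of $L^2$ to the quartic is intersection with $(4H)^2$, and $(4H)^2\cdot[\hbox{quartic}]=(4H)^3=64H^3$, whence $\check t(\hbox{quartic})=64$.

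Next I would invoke the Theorem for the mirror side, which gives $t(\hbox{quartic-mirror})=64$ (the count of the $4\cdot 16$ elementary triangles in the triangulation of $\partial P$ in Figure~\ref{fig:tetface}, reorganized through Figure~\ref{fig:newton}) and $k(\hbox{quartic-mirror})=4$ (the $1$-skeleton of $\check{\mathcal{P}}$ being homologous to four times a primitive tropical cycle, Figure~\ref{fig:fourtimesprimitive}, with $k=8$ excluded via the divisibility $k^2\mid t$ and \cite[Fig.~B]{FS86}). Comparing term by term, $\check t(\hbox{quartic})=64=t(\hbox{quartic-mirror})$ and $\check k(\hbox{quartic})=4=k(\hbox{quartic-mirror})$, which is exactly the asserted pair of equalities.

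The genuine content, as I see it, lies not in this final bookkeeping but in the two inputs that feed it, both of which are already in place above: the identification of $\check t(\hbox{quartic})$ with a self-intersection number in $\PP^3$, and the verification that $k(\hbox{quartic-mirror})$ equals $4$ and not $8$. If one wanted a more structural statement I would add a remark explaining why the coincidence is forced by mirror symmetry: the discrete Legendre transform exchanges the role of the polyhedral subdivision of $(B,\mathcal{P})$ and the piecewise linear function $\varphi$, so the triangle count governing $N$ on $\check{\mathcal{X}}$ is transported to the data of the polarization $4H$ on the quartic that governs $L$ — but for the purposes of proving the Corollary this is optional, and the proof is complete once the four numbers are matched.
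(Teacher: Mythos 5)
Your proposal is correct and matches the paper's treatment: the corollary is stated there as an immediate deduction from the preceding theorem ($t(\hbox{quartic-mirror})=64$, $k(\hbox{quartic-mirror})=4$) and the computation of $\check t(\hbox{quartic})=64$ and $\check k(\hbox{quartic})=4$ via the Lefschetz operator, with no further argument given. Your added structural remark about the discrete Legendre transform is optional commentary, just as you say, and does not change the substance.
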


\bibliography{quarticbib}

 \end{document}